\documentclass[onecolumn,notitlepage,11pt]{article}
\usepackage{sectsty}
\usepackage{mathrsfs}
\usepackage{latexsym}
\usepackage{amsbsy}
\usepackage{amssymb}
\usepackage{verbatim}
\usepackage{amsthm}
\usepackage{amsfonts}
\usepackage{amsmath}
\usepackage{graphicx}
\usepackage{layout}
\usepackage{pb-diagram}
\usepackage{amscd}
\usepackage{anysize}
\usepackage{tikz}
\usepackage{graphicx}
\usepackage[all,cmtip]{xy}
\usepackage{youngtab}
\usepackage{hyperref}

\newcommand{\Q}{\mathbb{Q}}

\newcommand{\D}{\mathbb{D}}

\newcommand{\beq}{\begin{equation*}}
\newcommand{\eeq}{\end{equation*}}
\theoremstyle{definition}
\newtheorem{theorem}{Theorem}[section]
\newtheorem*{theorem*}{Theorem}
\newtheorem{lemma}[theorem]{Lemma}

\newtheorem{rmk}[theorem]{Remark}

\marginsize{3cm}{3cm}{2cm}{2cm} 
\title{\Large\bf On negatively curved bundles with hyperbolic fibers outside the Igusa stable range}
\author{Mauricio Bustamante \and Francis Thomas Farrell \and Yi Jiang}
\newcommand{\Addresses}{{
  \bigskip
  \footnotesize
\noindent\textsc{Mauricio Bustamante}\par\nopagebreak\textsc{Institut f\"ur Mathematik, Universit\"at Augsburg}\par\nopagebreak
 \texttt{EMAIL: Mauricio.BustamanteLondono@math.uni-augsburg.de}\\
\textsc{Francis Thomas Farrell}\par\nopagebreak\textsc{Yau Mathematical Sciences Center, Tsinghua University, Beijing, China}\par\nopagebreak
 \texttt{EMAIL: farrell@math.tsinghua.edu.cn}\\
\textsc{Yi Jiang}\par\nopagebreak\textsc{Yau Mathematical Sciences Center, Tsinghua University, Beijing, China}\par\nopagebreak
 \texttt{EMAIL: yjiang117@mail.tsinghua.edu.cn}
 }}
\date{}

\def\a{\alpha}
\def\b{\partial}
\def\MET{\mathcal{MET}}
\def\Q{\mathbb{Q}}
\def\mapD{\mbox{Map}(\mathbb{D}^n; \partial,\frac{\mbox{Top}(n)}{O(n)})}
\def\mapM{\mbox{Map}(M,\frac{\mbox{Top}(n)}{O(n)})}
\def\TOP{\mathbb{TOP}}
\def\DIFF{\mathbb{DIFF}}
\def\TDM{\frac{\mbox{Top}_0(M)}{\mbox{Diff}_0(M)}}
\def\TDD{\frac{\mbox{Top}(\mathbb{D}^n,\b)}{\mbox{Diff}(\mathbb{D}^n,\b)}}

\begin{document}
\maketitle

\begin{abstract}
We prove that the Teichm\"{u}ller space $\mathcal{T}^{<0}(M)$ of
negatively curved metrics on a hyperbolic manifold $M$ has
nontrivial $i$-th rational homotopy groups for some $i> \dim M$.
Moreover, some elements of infinite order in $\pi_i B\mbox{Diff}(M)$
can be represented by bundles over $S^i$ with fiberwise negatively
curved metrics.


\end{abstract}

\section{Introduction}

 Let $M$ be a closed smooth manifold. Denote by $\MET(M)$ the space of all smooth Riemannian metrics on $M$, endowed
 with the smooth topology and let $\mbox{Diff}_{0}(M)$ be the group of all smooth self-diffeomorphisms of $M$ which are homotopic to the identity $1_{M}$. The group $\mathcal{D}_{0}(M):=\mathbb{R}^+\times \mbox{Diff}_{0}(M)$ acts on $\MET(M)$ by scaling and pulling back metrics, i.e.
$$(\lambda,\phi)g=\lambda(\phi^{-1})^*g$$
for $g\in\MET(M)$ and $(\lambda,\phi)\in
\mathcal{D}_{0}(M)=\mathbb{R}^+\times \mbox{Diff}_{0}(M)$. The
quotient space $$\mathcal{T}(M):=\MET(M)/\mathcal{D}_{0}(M)$$ is
called the \textit{Teichm\"{u}ller space of all Riemannian metrics}
on $M$. Furthermore, if $M$ admits a Riemannian metric with negative
sectional curvature, the \textit{Teichm\"{u}ller space
$\mathcal{T}^{<0}(M)$ of all negatively curved metrics} on $M$ is
defined to be the quotient space $\MET^{<0}(M)/\mathcal{D}_{0}(M)$
with $\MET^{<0}(M)$ the subspace of $\MET(M)$ consisting of all
negatively curved metrics on $M$. Moreover,
in that case the action
of $\mathcal{D}_{0}(M)$ is free and $\mathcal{T}(M)$
is a model for the classifying space $B\mbox{Diff}_0(M)$ of
$\mbox{Diff}_0(M)$ (see \cite[Lemma 1.1]{FO09}).

It is proved in \cite{FO09} that if $M$ is real hyperbolic, the Teichm\"{u}ller space $\mathcal{T}^{<0}(M)$ is,
in general, not contractible. More precisely, they prove that the inclusion map $F: \mathcal{T}^{<0}(M)\rightarrow \mathcal{T}(M)$
(which forgets the negatively curved structure) is in general
homotopically nontrivial. Similar results are obtained in
\cite{S14,FS17} when $M$ is a Gromov--Thurston manifold or a
complex hyperbolic manifold. These results can be translated in
the language of bundle theory (c.f. \cite{FOGAFA,F}), which we now
recall. A smooth $M-$bundle $p:E\rightarrow B$ is said to be
\textit{negatively curved} if its concrete fibers
$p^{-1}(x), x\in B$ can be endowed with negatively curved
Riemannian metrics varying continuously from fiber to fiber.
The Teichm\"{u}ller space $\mathcal{T}(M)=B\mbox{Diff}_0(M)$
classifies equivalence classes of smooth bundles with abstract
fiber $M$ and equipped with a fiber homotopy trivialization,
whereas $\mathcal{T}^{<0}(M)$ classifies equivalence classes of
negatively curved bundles with abstract fiber $M$ and equipped
with a fiber homotopy trivialization. It follows from the results
obtained in \cite{FO09,S14,FS17} that there are negatively curved
bundles which are nontrivial as smooth bundles with abstract fiber
a hyperbolic manifold, a Gromov--Thurston manifold or a complex
hyperbolic manifold. However they all represent
elements of finite order
in the homotopy groups of $\mathcal{T}(M)$. 
We show in this paper that there are negatively curved bundles representing elements of infinite order in the homotopy groups of $\mathcal{T}(M)$. Consequently there are closed real hyperbolic manifolds $M$ such that
$\pi_{i}\mathcal{T}^{<0}(M)\otimes \mathbb{Q}\neq 0$ for some $i$ augmenting the main result of \cite{FO09}.

In order to ease the notation
we write $\Phi^{\Q}$ for $\Phi\otimes id_{\Q}$ when $\Phi$ is a homomorphism between abelian groups. Occasionally we denote $\pi_*(\cdot)\otimes \Q$ by $\pi_*^{\Q}(\cdot)$.

\begin{theorem}\label{app_Weiss class}There is a positive constant $c$ such that for every closed real hyperbolic manifold $M^{n}$ of dimension $n\geq c$ there exists a finite sheeted
cover $\tilde{M}$ of $M$ such that for every finite sheeted cover
$\hat{M}$ of $\tilde{M}$, the homomorphism
\beq
F_*^{\Q}:\pi_{i}\mathcal{T}^{<0}(\hat{M})\otimes \Q\rightarrow
\pi_{i}\mathcal{T}(\hat{M})\otimes \Q
\eeq
induced by the forget structure map is nontrivial for some $i\in[n+1,2n]$.
\end{theorem}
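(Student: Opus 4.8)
The plan is to realise such a class by transplanting one of Weiss's exotic smooth disc bundles into a suitably chosen finite cover of $M$, using the Farrell--Ontaneda grafting construction to keep it fibrewise negatively curved, and then detecting it rationally by smoothing theory. To pin down the source of the class I would, enlarging $c$ if necessary, invoke Weiss's theorem on the exotic rational Pontryagin classes of $B\Top(n)$: for $n\geq c$ there is an integer $i\in[n+1,2n]$ with $\pi_i B\Diff(\mathbb{D}^n,\partial)\otimes\Q\neq 0$, equivalently --- via Morlet's equivalence $B\Diff(\mathbb{D}^n,\partial)\simeq\Omega^n_0(\Top(n)/O(n))$ --- a nonzero class in $\pi_{i+n}(\Top(n)/O(n))\otimes\Q$. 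The crucial feature is that this $i$ lies well outside the Igusa stable range $i\lesssim n/3$, which is exactly why the stable pseudoisotopy computations behind \cite{FO09} cannot detect it. Fix an element $w\in\pi_i B\Diff(\mathbb{D}^n,\partial)$ of infinite order.

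Since the fundamental group of a closed hyperbolic manifold is residually finite, I would first pass to a finite cover $\tilde{M}\to M$ which is orientable and has injectivity radius larger than the constant furnished by the Farrell--Ontaneda construction; every further finite cover $\hat{M}\to\tilde{M}$ then shares both properties. Fixing a small embedded ball $\mathbb{D}^n\subset\hat{M}$, extending diffeomorphisms by the identity and regluing the trivial $\hat{M}$-bundle over the corresponding $\mathbb{D}^n$-bundle yields a map
\beq
\Theta\colon B\Diff(\mathbb{D}^n,\partial)\longrightarrow B\Diff_0(\hat{M}).
\eeq
The essential analytic input of \cite{FO09} --- available because the ball is small relative to the injectivity radius --- is that the resulting $\hat{M}$-bundles carry fibrewise (in fact pinched) negatively curved metrics varying continuously with the family, so that $\Theta$ lifts along the forget map to $\tilde{\Theta}\colon B\Diff(\mathbb{D}^n,\partial)\to\mathcal{T}^{<0}(\hat{M})$ with $F\circ\tilde{\Theta}=\Theta$.

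It then suffices to show that $\Theta_*(w)$ has infinite order in $\pi_i B\Diff_0(\hat{M})=\pi_i\mathcal{T}(\hat{M})$, and I would prove this through smoothing theory. As $\Top(\mathbb{D}^n,\partial)$ is contractible, $B\Diff(\mathbb{D}^n,\partial)\simeq\tfrac{\Top(\mathbb{D}^n,\partial)}{\Diff(\mathbb{D}^n,\partial)}$, and $\Theta$ factors as the disc inclusion $\tfrac{\Top(\mathbb{D}^n,\partial)}{\Diff(\mathbb{D}^n,\partial)}\to\tfrac{\Top_0(\hat{M})}{\Diff_0(\hat{M})}$ followed by $\tfrac{\Top_0(\hat{M})}{\Diff_0(\hat{M})}\to B\Diff_0(\hat{M})$ (the composite to $B\Top_0(\hat{M})$ being nullhomotopic). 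By Burghelea--Lashof smoothing theory together with Morlet's comparison theorem this disc inclusion is modelled, with source $\Omega^n(\Top(n)/O(n))$, by the map
\beq
\Map\big(\mathbb{D}^n;\partial,\Top(n)/O(n)\big)\longrightarrow\Map\big(\hat{M},\Top(n)/O(n)\big)
\eeq
which extends a based map $\mathbb{D}^n/\partial\mathbb{D}^n=S^n\to\Top(n)/O(n)$ over $\hat{M}$ by the constant map, i.e. precomposes with a degree one collapse map $\hat{M}\to S^n$ (this is where orientability of $\hat{M}$ enters). In the Federer spectral sequence $H^s(\hat{M};\pi_t(\Top(n)/O(n)))\Rightarrow\pi_{t-s}\Map(\hat{M},\Top(n)/O(n))$ the image of $w$ lands in the top filtration piece $H^n(\hat{M};\pi_{i+n}(\Top(n)/O(n)))\otimes\Q\cong\Q$, and (choosing $i$, among the degrees for which Weiss's theorem applies, with some care) one checks that it survives rationally to $E_\infty$: the differentials out of the zero line are split off by evaluation at a point, and the remaining potential differentials into this piece vanish for degree reasons on $\Top(n)/O(n)$. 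Finally, since $\hat{M}$ is aspherical the Farrell--Jones theorem gives a rational description of $\pi_*\Top_0(\hat{M})$, against which one checks that the nonzero image of $w$ in $\pi_i\tfrac{\Top_0(\hat{M})}{\Diff_0(\hat{M})}\otimes\Q$ is not annihilated by $\pi_i\tfrac{\Top_0(\hat{M})}{\Diff_0(\hat{M})}\otimes\Q\to\pi_i B\Diff_0(\hat{M})\otimes\Q$. Hence $F_*^{\Q}(\tilde{\Theta}_* w)=\Theta_*(w)\neq 0$ in $\pi_i\mathcal{T}(\hat{M})\otimes\Q$, which is the assertion of the theorem.

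I expect the main obstacle to be the second step: establishing that the grafted bundles --- for a whole \emph{family} of exotic discs, and uniformly over all further covers --- admit fibrewise negatively, indeed pinched, curved metrics. This is precisely where the smallness of the ball, hence the passage to covers of large injectivity radius, is forced on us, and where the quantitative pinching estimates of \cite{FO09} must be invoked and upgraded to families. By contrast the remaining inputs --- Weiss's theorem, the smoothing theory of Morlet and of Burghelea--Lashof, and Farrell--Jones rigidity for $\Top_0$ of closed aspherical manifolds --- can largely be quoted.
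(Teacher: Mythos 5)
Your overall skeleton --- Weiss's theorem as source, grafting an exotic disc family into a large-injectivity-radius finite cover of $M$, Farrell--Ontaneda for the negatively curved lift, and smoothing theory for rational detection --- matches the paper's. But there is a genuine gap at the lift step, and it is structural rather than merely quantitative.

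You assert that the grafting map $\Theta\colon B\mbox{Diff}(\mathbb{D}^n,\partial)\to B\mbox{Diff}_0(\hat{M})$ lifts \emph{in its entirety} along the forgetful map to $\tilde{\Theta}\colon B\mbox{Diff}(\mathbb{D}^n,\partial)\to\mathcal{T}^{<0}(\hat{M})$. That is false. The Farrell--Ontaneda theorem (\cite[Theorem 2]{FO09}, restated as Theorem \ref{th:FO09}) produces fibrewise negatively curved metrics only for compact families living inside $\mathcal{G}$, the subgroup of $\mbox{Diff}_0(S^{n-1}\times[1,2],\partial)$ consisting of \emph{layered} diffeomorphisms $\phi$ with $\phi(\cdot,t)\in\mbox{Diff}(S^{n-1})$ for each $t$. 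An arbitrary $f\in\mbox{Diff}(\mathbb{D}^n,\partial)$ does not sit in $\mathcal{G}$ after the usual embedding $\mathbb{D}^n=\mathbb{D}^{n-1}\times[1,2]\hookrightarrow S^{n-1}\times[1,2]$; only those that arise from smooth loops of diffeomorphisms of $\mathbb{D}^{n-1}$ do. In other words, one must first put the class $w$ in the image of the Gromoll map $\alpha_{n*}\colon\pi_{i+1}\mbox{Diff}(\mathbb{D}^{n-1},\partial)\to\pi_i\mbox{Diff}(\mathbb{D}^n,\partial)$; this is precisely the hypothesis of Lemma \ref{FO}, and it is why the paper's input from Weiss's theorem must come from $B\mbox{Top}(2m)$ with $2m\leq n-1$ rather than from $B\mbox{Top}(n)$. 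You flagged the lift as ``the main obstacle'' but described it as upgrading curvature-pinching estimates to families; the real issue is that the Farrell--Ontaneda machine is simply not set up to run on non-layered families, no matter how good the pinching.

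There is a second, related discrepancy on the detection side. The paper establishes injectivity of the transplantation on $\pi_*^{\Q}$ not by a Federer spectral sequence analysis but by showing (Lemmas \ref{paralnonzero} and \ref{nonzero}) that the class survives a \emph{further} Gromoll map $\alpha_{n+1*}^{\Q}$, after which one can stabilize to $M\times S^1$, exploit parallelizability, and split via a degree-one Pontryagin--Thom collapse of a trivial tubular neighbourhood $M\times\mathbb{D}^{i+1}\subset S^{n+i+1}$. Your Federer-spectral-sequence survival claim (``the remaining potential differentials into this piece vanish for degree reasons on $\Top(n)/O(n)$'') is not obviously true and is left unargued --- there is no a priori reason for $H^{n-r}(\hat{M};\pi_{i+n-r+1}^{\Q}(\Top(n)/O(n)))$ to vanish. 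Taken together, the two constraints (the class must lie in the image of $\alpha_{n*}$ and must remain nonzero under $\alpha_{n+1*}^{\Q}$) are exactly the hypothesis of Theorem \ref{mainth} that the composite $\alpha_{n+1*}^{\Q}\circ\alpha_{n*}^{\Q}$ is nontrivial, and producing such classes from Weiss's theorem --- taking into account the congruences and the passage from $B\mbox{Top}(2m)$ to $\mbox{Top}(l)/O(l)$ via the connecting map --- is the content of Lemma \ref{le:app}. Your proposal collapses these two distinct Gromoll conditions into the single, weaker assertion $\pi_i^{\Q}B\mbox{Diff}(\mathbb{D}^n,\partial)\neq 0$, which by itself does not suffice.
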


\begin{rmk}
  Note that the range $i\geq n+1$ in Theorem \ref{app_Weiss class} is outside the Igusa stable range which is $i\leq\min\{\frac{n-7}{2},\frac{n-4}{3}\}$(c.f. \cite{Igu88} and \cite[p.252]{Igu02}). We show in a companion paper \cite{BFJ} that inside Igusa stable range, negatively curved bundles over spheres can only represent elements of finite order in $\pi_i \mathcal{T}(M^n)$.
\end{rmk}

We prove Theorem \ref{app_Weiss class} in Section
\ref{se:app_mainth}. This theorem will follow from Theorem
\ref{mainth} below together with the discovery of nontrivial
rational Pontryagin classes in $H^{4m+4k}(B\mbox{Top}(2m);\Q)$ for
some $k>0$ by Weiss \cite{W15}.

 Let $\mbox{Diff}(\mathbb{D}^{n},\partial)$ be the group of all self-diffeomorphisms of $\mathbb{D}^n$ that restrict to the identity on the boundary $\partial \mathbb{D}^n$, equipped with the $C^{\infty}-$topology. Let $\Omega \mbox{Diff}(\mathbb{D}^{n},\partial)$ be the space of all continuous loops in $\mbox{Diff}(\mathbb{D}^{n},\partial)$ based at the identity $1_{\mathbb{D}^{n}}$. When necessary, we will assume the loops $t\mapsto f_{t}\in\mbox{Diff}(\mathbb{D}^{n},\partial)$
for $t\in [0,1]$ are smooth, i.e. the map
$(x,t)\mapsto f_{t}(x)$ is smooth. Note that the inclusion of the space of all smooth loops into $\Omega \mbox{Diff}(\mathbb{D}^{n},\partial)$ is a homotopy equivalence (c.f. \cite[p.55]{FO09}). Define a map
\beq
\alpha_{n}:\Omega \mbox{Diff}(\mathbb{D}^{n-1},\partial)\rightarrow \mbox{Diff}(\mathbb{D}^n,\b)
\eeq
 by $\a_n(f)(x,t)=(f_{t}(x),t)$ for $(x,t)\in \mathbb{D}^{n-1}\times [0,1]=\mathbb{D}^n$, where $f\in \Omega \mbox{Diff}(\mathbb{D}^{n-1},\b)$ is a loop based at $1_{\D^n}$.

\begin{theorem}\label{mainth}
  Let $i,n$ be two positive integers  with $i\geq n+1$. Assume that the composition of homomorphisms
\begin{equation*}
\xymatrix{\pi_{i+1}\mbox{Diff}(\mathbb{D}^{n-1},\partial)\otimes \Q~\ar[r]^-{\a_{n*}^{\Q}}&~\pi_{i}\mbox{Diff}(\mathbb{D}^{n},\partial)\otimes\Q~\ar[r]^-{\a_{n+1*}^{\Q}}&~\pi_{i-1}\mbox{Diff}(\mathbb{D}^{n+1},\partial)\otimes \Q
}
\end{equation*}
is nontrivial. Then for every closed real hyperbolic manifold $M$ of dimension $n$ there exists a finite sheeted
cover $\tilde{M}$ of $M$ such that for every finite sheeted cover
$\hat{M}$ of $\tilde{M}$, the homomorphism $F_*\otimes
id_{\Q}:\pi_{i+1}\mathcal{T}^{<0}(\hat{M})\otimes \Q\rightarrow
\pi_{i+1}\mathcal{T}(\hat{M})\otimes \Q$ induced by the forget
structure map is nontrivial.
\end{theorem}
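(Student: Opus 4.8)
The plan is to transport the hypothesis—that a certain composite of Igusa-type suspension maps is rationally nontrivial on homotopy groups of diffeomorphism groups of disks—into a statement about the Teichmüller space of a hyperbolic manifold, following the strategy pioneered by Farrell--Ontaneda. The bridge is the classical fact that $\pi_*\mathrm{Diff}(\mathbb{D}^n,\partial)$ maps to $\pi_*\mathrm{Diff}(M)$, and hence to $\pi_*\mathcal{T}(M)=\pi_*B\mathrm{Diff}_0(M)$, by "supporting" a family of disk diffeomorphisms inside a small embedded ball in $M$. Concretely, I would first fix a closed real hyperbolic $M^n$ and recall that, by Farrell--Ontaneda, a class $\omega\in\pi_{i+1}\mathrm{Diff}(\mathbb{D}^n,\partial)$ produces, after passing to a suitable finite cover $\tilde M$ (chosen so that $M$ contains an embedded tubular neighborhood of a closed geodesic long enough to run the "pushing toward the boundary" argument), a negatively curved bundle over $S^{i+1}$ whose underlying smooth bundle represents the image of $\omega$ under $\pi_{i+1}\mathrm{Diff}(\mathbb{D}^n,\partial)\to\pi_{i+1}B\mathrm{Diff}_0(\tilde M)$. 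The upshot is a factorization, through $\pi_{i+1}\mathcal{T}^{<0}(\tilde M)$, of the composite $\pi_{i+1}\mathrm{Diff}(\mathbb{D}^n,\partial)\to\pi_{i+1}\mathcal{T}(\tilde M)$; this is the content that makes the theorem reduce to \emph{smooth} nontriviality, detected rationally after the forget-structure map $F_*$.

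Next I would identify where the iterated maps $\alpha_{n+1*}\circ\alpha_{n*}$ come in. The point is that the relevant ball in $\hat M$ in which the family is supported should itself be presented as an iterated mapping cylinder $\mathbb{D}^{n-1}\times[0,1]\times\cdots$, or more precisely that the geometry of a hyperbolic manifold forces one to increase dimension twice—once to embed a loop-of-diffeomorphisms family as a diffeomorphism family of a one-higher-dimensional disk (this is exactly the map $\alpha_n$), and again because the warped-product/immersion techniques needed to keep curvature negative while spreading the support cost another suspension (the map $\alpha_{n+1}$). Thus the class one can realize negatively-curvedly in $\pi_{i+1}\mathcal{T}^{<0}(\hat M)$ is not an arbitrary element of $\pi_{i+1}\mathrm{Diff}(\mathbb{D}^n,\partial)$ but rather one in the image of $\alpha_{n*}$ applied to $\pi_{i+2}\mathrm{Diff}(\mathbb{D}^{n-1},\partial)$; and to detect that it survives rationally to $\pi_{i+1}\mathcal{T}(\hat M)$—equivalently, that the underlying smooth bundle is rationally essential and remains so under all further finite covers—one composes once more with $\alpha_{n+1*}$ into $\pi_i\mathrm{Diff}(\mathbb{D}^{n+1},\partial)$, where a fixed rational cohomology class (later supplied by Weiss's Pontryagin classes in $H^{*}(B\mathrm{Top}(2m);\Q)$ via Theorem~\ref{app_Weiss class}'s derivation) pairs nontrivially. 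So the hypothesis that $\alpha_{n+1*}^{\Q}\circ\alpha_{n*}^{\Q}$ is nonzero is precisely the input that guarantees both that there is something to realize and that it is not killed.

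The key steps, in order, are: (i) choose $\tilde M$ so that $M$ contains an embedded $\mathbb{D}^{n-1}\times[0,1]$-neighborhood of a closed geodesic with length above the Farrell--Ontaneda threshold, and verify the same for every further finite cover $\hat M$ (covers of hyperbolic manifolds inherit such geodesics); (ii) given $0\neq\beta\in\pi_{i+2}\mathrm{Diff}(\mathbb{D}^{n-1},\partial)\otimes\Q$ with $\alpha_{n+1*}^{\Q}\alpha_{n*}^{\Q}\beta\neq0$, form the loop-of-diffeomorphisms family, apply $\alpha_n$ to get a family $\alpha_{n*}\beta$ of diffeomorphisms of $\mathbb{D}^n$ supported near the geodesic; (iii) run the Farrell--Ontaneda construction (hyperbolic suspension / warped products) to promote this to a fiberwise negatively curved $\hat M$-bundle over $S^{i+1}$, obtaining a class $z\in\pi_{i+1}\mathcal{T}^{<0}(\hat M)$ whose image in $\pi_{i+1}\mathcal{T}(\hat M)$ is the image of $\alpha_{n*}\beta$ under the "support in a ball" homomorphism $\pi_{i+1}\mathrm{Diff}(\mathbb{D}^n,\partial)\to\pi_{i+1}B\mathrm{Diff}_0(\hat M)$; (iv) show $F_*^{\Q}(z)\neq0$ by detecting the underlying smooth bundle: the "support in a ball" map, followed by the natural transformation $B\mathrm{Diff}_0(\hat M)\to B\mathrm{Top}(\hat M)$ and concatenation with $\alpha_{n+1}$, is rationally injective on the class $\alpha_{n*}\beta$ precisely because $\alpha_{n+1*}^{\Q}\alpha_{n*}^{\Q}\beta\neq0$ and because rational Pontryagin-type classes are stable under finite covers (a transfer argument). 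I expect step (iv)—pinning down that the smooth-bundle class detected downstairs is exactly $\alpha_{n+1*}\alpha_{n*}\beta$ up to a nonzero rational multiple, and that this invariant is not annihilated by passing to arbitrary finite covers—to be the main obstacle, since it requires compatibility between the geometric Farrell--Ontaneda construction, the algebraic structure of the Igusa suspension maps, and the multiplicativity/transfer behavior of the detecting characteristic classes.
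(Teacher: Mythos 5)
Your high-level skeleton is right: realize a disk class in the image of the forget-structure map via the Farrell--Ontaneda theorem, then detect rational nontriviality of its image in $\pi_{i+1}\mathcal{T}(\hat M)=\pi_i\mathrm{Diff}_0(\hat M)$. But two of your attributions are off and your detection step (iv) has a genuine gap.

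First, you attribute the second Gromoll map $\alpha_{n+1}$ to the geometric (curvature) side --- ``the warped-product/immersion techniques \ldots cost another suspension.'' That is not where it enters. The Farrell--Ontaneda realizability statement (Lemma \ref{FO}) uses \emph{only} $\alpha_n$: it requires $\overline x$ to lie in the image of $\alpha_{n*}$, nothing more. The composite $\alpha_{n+1*}\circ\alpha_{n*}$ appears entirely on the \emph{detection} side, namely to show that the realized class $\iota(\nu)_*\overline x$ is nonzero in $\pi_i^{\Q}\mathrm{Diff}_0(\hat M)$. Conflating the two obscures the logic of the theorem.

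Second, and this is the actual gap: your step (iv) proposes detection by pushing into $B\mathrm{Top}(\hat M)$ and pairing with Pontryagin-type classes, with finite-cover stability coming from ``a transfer argument.'' The paper does nothing of the sort at this point. Pontryagin classes (Weiss's theorem) are only invoked in Section \ref{se:app_mainth} to \emph{verify the hypothesis} of Theorem \ref{mainth}, i.e.\ to exhibit $i,n$ for which $\alpha_{n+1*}^{\Q}\alpha_{n*}^{\Q}\neq 0$; they play no role inside the proof of Theorem \ref{mainth} itself. The detection in the paper (Lemma \ref{nonzero}) is purely homotopy-theoretic: it passes from $\hat M$ to the \emph{parallelizable} manifold $\hat M\times S^1$ via a commutative square relating $\alpha_{n+1}$ to the Gromoll map $\sigma$ for $\hat M$, and then invokes Lemma \ref{paralnonzero}, whose proof uses the Morlet identification, Farrell--Weinberger surjectivity of $\mathrm{Diff}(\hat M)\to\mathrm{Top}(\hat M)$ on higher homotopy, and a degree-one collapse map $S^{n+i+1}\to M\times\mathbb D^{i+1}/\partial\to S^{n+i+1}$ (which needs $N(M)\cong M\times\mathbb D^{i+1}$, hence parallelizability). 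In particular, the essential intermediate step of passing to a \emph{stably parallelizable} finite cover $\tilde M$ (Sullivan/Deligne--Sullivan, or Okun) is entirely absent from your sketch --- your step (i) only secures a long geodesic for the FO threshold, which is necessary but not sufficient. Without stable parallelizability you have no substitute for Lemma \ref{paralnonzero}, and your proposed transfer/Pontryagin argument is not set up precisely enough to replace it: you would need to identify a characteristic class of the vertical tangent bundle of the resulting $\hat M$-bundle that (a) sees exactly $\alpha_{n+1*}^{\Q}\alpha_{n*}^{\Q}\beta$ and (b) is functorial under the Gromoll map $\sigma$ and under finite covers, none of which you establish. This is the main obstacle you correctly flag but do not overcome.
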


\begin{rmk}
  The map $\a_{n}$ has appeared in Gromoll's fundamental work \cite{Gro66} on positive curvature problems and it has also been used by the second author and Ontaneda in \cite{FO09} to study the Teichm\"{u}ller space of negatively curved metrics.
\end{rmk}

  The proof of Theorem \ref{mainth} is presented in the next two sections.

\subsection*{Acknowledgements}
M. Bustamante is supported by the DFG Priority Programme
\textit{Geometry at Infinity} SPP 2026: \textit{Spaces and Moduli
Spaces of Riemannian Metrics with Curvature Bounds on compact and
non-compact Manifolds.}
 He acknowledges the hospitality of the Yau Mathematical Sciences Center at Tsinghua University in Beijing,
 where he was a postdoc when most of the results of this paper were obtained. Y. Jiang's research is partially supported by NSFC 11571343.

\section{Nontrivial elements in $\pi_{*}\mathcal{T}(M)\otimes \Q$}
The goal of this section is to construct nontrivial elements in $\pi^{\mathbb{Q}}_{i+1}\mathcal{T}(M)$ out of elements in $\pi^{\mathbb{Q}}_{i}\mbox{Diff}(\mathbb{D}^n,\b)$. Recall that a smooth manifold is \textit{parallelizable} if its tangent bundle is trivial.

Throughout the paper $\mbox{Top}(n)$ denotes the topological group
 with the compact open topology of all self-homeomorphisms of $\mathbb{R}^n$ and $O(n)\subset
\mbox{Top}(n)$ is the subgroup of all orthogonal transformations of
$\mathbb{R}^n$. Let $[X;A,Y;B]$ denote the set of all homotopy
classes of maps $(X,A)\rightarrow (Y,B)$ between pairs of
topological spaces.

\begin{lemma}\label{paralnonzero}
  Let $M^n$ be a closed parallelizable manifold which admits a Riemannian metric of nonpositive sectional curvature and let $\iota: \mbox{Diff}(\mathbb{D}^n,\b)\rightarrow \mbox{Diff}_0(M)$ be the map given by extending each $f\in \mbox{Diff}(\mathbb{D}^n,\b)$ by the identity outside a fixed embedding
$\nu:\mathbb{D}^n\subset M$. If $i\geq n\geq 5$, then the homomorphism $\iota_*: \pi_i \mbox{Diff}(\mathbb{D}^n,\b)\rightarrow \pi_i \mbox{Diff}_0(M)$ is injective.
\end{lemma}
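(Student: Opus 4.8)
The plan is to exhibit $\iota$ as the fibre inclusion of a fibration and then to show that the boundary homomorphism of the associated long exact sequence vanishes in degree $i$.

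First I would fix the complementary submanifold $M_{0}:=\overline{M\setminus\nu(\mathrm{int}\,\mathbb{D}^{n})}$, a compact $n$-manifold with $\partial M_{0}=\nu(S^{n-1})$, and consider the restriction map $r\colon\mbox{Diff}_{0}(M)\to\mbox{Emb}_{0}(M_{0},M)$, $\phi\mapsto\phi|_{M_{0}}$, onto the path component of the inclusion. By the parametrised isotopy extension theorem $r$ is a fibration, and its fibre over the inclusion is canonically homotopy equivalent to $\mbox{Diff}(\mathbb{D}^{n},\partial)$ (extend a diffeomorphism of the disk by the identity over $M_{0}$), the fibre inclusion inducing precisely $\iota$ on homotopy groups. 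Hence $\iota_{*}$ is injective on $\pi_{i}$ if and only if the connecting homomorphism $\partial\colon\pi_{i+1}\mbox{Emb}_{0}(M_{0},M)\to\pi_{i}\mbox{Diff}(\mathbb{D}^{n},\partial)$ vanishes.

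To get hold of this connecting map I would pass to smoothing theory. Nonpositive curvature is used only through Cartan--Hadamard: $\widetilde{M}\cong\mathbb{R}^{n}$, so $M$ is aspherical; thus $M_{0}\simeq M\setminus\{\mathrm{pt}\}$ has the homotopy type of a CW complex of dimension $\le n-1$, and (using $n\ge 5$ and the $h$-cobordism theorem) the complement of every embedding in $\mbox{Emb}_{0}(M_{0},M)$ is a smooth disk. Since $\mbox{Top}(\mathbb{D}^{n},\partial)$ is contractible by the Alexander trick, the corresponding extension problem in the topological category has no obstruction; so the whole obstruction lives in the comparison between $\mbox{Diff}$ and $\mbox{Top}$, which by smoothing theory is controlled by $\mbox{Top}(n)/O(n)$. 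Concretely, Morlet's comparison theorem gives $\mbox{Diff}(\mathbb{D}^{n},\partial)\simeq\Omega^{n+1}\big(\mbox{Top}(n)/O(n)\big)$, and since $M_{0}$ is parallelizable its space of smoothings is the honest mapping space $\mbox{Map}\big(M_{0},\mbox{Top}(n)/O(n)\big)$. Putting this into the fibration of the previous paragraph, the vanishing of $\partial$ becomes the statement that the natural map $\pi_{i+1}\mbox{Diff}_{0}(M_{0})\to\pi_{i}\mbox{Diff}(\mathbb{D}^{n},\partial)\cong\pi_{i+n+1}\big(\mbox{Top}(n)/O(n)\big)$ is zero; geometrically this map is the obstruction to capping off, by a $\mathbb{D}^{n}$-bundle, the fibrewise boundary $S^{n-1}$-bundle of a family of diffeomorphisms of $M_{0}$, and it factors through restriction to $\partial M_{0}=S^{n-1}$ followed by the connecting map of $\mbox{Diff}(\mathbb{D}^{n},\partial)\to\mbox{Diff}(\mathbb{D}^{n})\to\mbox{Diff}(S^{n-1})$.

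The heart of the argument -- and the step I expect to be the main obstacle -- is to prove that this composite is zero once $i\ge n$. Here the remaining hypotheses are decisive: parallelizability of $M$ trivialises the tangential data along $\partial M_{0}$ and renders the relevant $\mbox{Top}(n)/O(n)$-bundles plain mapping spaces, while asphericity confines all of the relevant information to the $(n-1)$-dimensional complex $M_{0}$; in the range $i\ge n$ a class that a priori lies in $\pi_{i+n+1}\big(\mbox{Top}(n)/O(n)\big)$ is therefore detected only on that skeleton, and a dimension count makes it vanish. Equivalently, one must show that for a family of diffeomorphisms of $M_{0}$ parametrised by $S^{i+1}$ with $i\ge n$ the boundary restriction lifts, through families of diffeomorphisms of $S^{n-1}$, along $\mbox{Diff}(\mathbb{D}^{n})\to\mbox{Diff}(S^{n-1})$ -- i.e.\ the fibrewise boundary sphere bundle is the sphere bundle of a disk bundle. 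Carrying this out requires identifying the geometric connecting homomorphism of the first paragraph with the homotopy-theoretic obstruction produced by smoothing theory, and keeping track of path components and of the harmless difference between $\mbox{Diff}_{0}$ and its block analogue; granting this, $\partial=0$, so $\iota_{*}$ is injective on $\pi_{i}$ for all $i\ge n\ge 5$.
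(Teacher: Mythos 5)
Your reduction in the first paragraph is fine and genuinely different from the paper's: you exhibit $\iota$ as the fibre inclusion of the isotopy-extension fibration $\mbox{Diff}(\mathbb{D}^n,\partial)\to\mbox{Diff}_0(M)\to\mbox{Emb}_0(M_0,M)$ and try to kill the connecting map, whereas the paper reduces (via the commuting square with the Morlet equivalences $\mu(\mathbb{D}^n)$, $\mu(M)$) to an explicit extension-by-constant map of mapping spaces $\eta:\mbox{Map}(\mathbb{D}^n;\partial,\mbox{Top}(n)/O(n))\to\mbox{Map}(M,\mbox{Top}(n)/O(n))$ and shows $\eta_*$ is split injective by a degree-one collapse map $S^{n+i+1}\to M\times\mathbb{D}^{i+1}/\partial\to S^{n+i+1}$, using parallelizability to trivialize the normal bundle of an embedding $M\hookrightarrow S^{n+i+1}$ furnished by Whitney (this is where $i\geq n$ enters).

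However, there is a genuine gap at what you yourself call the heart of the argument. You assert that the composite map into $\pi_i\mbox{Diff}(\mathbb{D}^n,\partial)\cong\pi_{i+n+1}(\mbox{Top}(n)/O(n))$ vanishes for $i\geq n$ because asphericity "confines all of the relevant information to the $(n-1)$-dimensional complex $M_0$" and "a dimension count makes it vanish." This is not an argument: the connecting map $\partial:\pi_{i+1}\mbox{Emb}_0(M_0,M)\to\pi_i\mbox{Diff}(\mathbb{D}^n,\partial)$ is a map between homotopy groups of infinite-dimensional function spaces, and neither source nor target is controlled by the cell dimension of $M_0$; nothing in what you wrote produces an actual vanishing. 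Moreover, your proposal never invokes the deep input the paper relies on, namely \cite[Corollary 4.3]{FW} (topological rigidity for closed nonpositively curved manifolds of dimension $\geq 5$), which gives surjectivity of $\pi_i\mbox{Diff}(M)\to\pi_i\mbox{Top}(M)$ and hence injectivity of the boundary map $\beta_M$ in diagram (\ref{diag:inj}). You use nonpositive curvature only through Cartan--Hadamard, so your argument would prove the lemma for any closed parallelizable aspherical $n$-manifold with $n\geq 5$; there is no reason to expect that without the rigidity theorem, and the absence of this ingredient strongly suggests the "dimension count" cannot be made to work. You also silently replace $\mbox{Emb}_0(M_0,M)$ by $\mbox{Diff}_0(M_0)$ when stating what must vanish; these are not the same space and the replacement needs justification.
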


\begin{proof}
Let $\mapD$ denote the space of all continuous maps
$(\mathbb{D}^n,\partial\D^n)\rightarrow\left(\frac{\mbox{Top}(n)}{O(n)},*\right)$
with the base point $*\in \frac{\mbox{Top}(n)}{O(n)}$ the coset of the identity element. Define a map $\eta:\mapD\rightarrow\mapM$ by extending each $f\in\mapD$ via the constant map outside $\mathbb{D}^n\subset M$.

  \textbf{Claim:} For every $i\geq 1$, if the homomorphism $\eta_*:\pi_{i+1} \mapD\rightarrow\pi_{i+1} \mapM$ is injective, then $\iota_*:
\pi_i \mbox{Diff}(\mathbb{D}^n,\b)\rightarrow \pi_i \mbox{Diff}_0(M)$ is injective.

\begin{proof}[Proof of Claim]\let\qed\relax
  Firstly, let $\TOP_0(M)$ (resp. $\DIFF_0(M)$) denote the
singular complex (resp. singular differentiable complex) of
$\mbox{Top}_0(M)$ (resp. $\mbox{Diff}_0(M)$) and let
\beq
\frac{\mbox{Top}_0(M)}{\mbox{Diff}_0(M)}:=\left|\frac{\TOP_0(M)}{\DIFF_0(M)}\right|
\eeq
 be the geometric realization of the simplicial set $\frac{\TOP_0(M)}{\DIFF_0(M)}$. The space $\TDD$ is similarly defined. Note that the map $\iota: \mbox{Diff}(\mathbb{D}^n,\b)\rightarrow \mbox{Diff}_0(M)$ extends canonically to a map $\mbox{Top}(\mathbb{D}^n,\b)\rightarrow \mbox{Top}_0(M)$, which
in turn gives rise to a map
$\overline{\iota}:\TDD\rightarrow \TDM$. Then consider the commutative diagram:
  \begin{equation}\label{diag:inj}
  \xymatrix{\pi_{i+1}\TDD\ar[r]^{\b_{\mathbb{D}^n}}_{\cong}\ar[d]_{\overline{\iota}_*}&\pi_i|\DIFF(\mathbb{D}^n,\b)|\ar[r]_{\cong}\ar[d]_{S\iota_*}&\pi_i\mbox{Diff}(\mathbb{D}^n,\b)\ar[d]_{\iota_*}\\
               \pi_{i+1}\TDM\ar[r]^{\b_M}&\pi_i|\DIFF_0(M)|\ar[r]_{\cong}&\pi_i \mbox{Diff}_0(M)}
  \end{equation}
   where $S\iota$ is induced by $\iota$ in the canonical way and $\b_{\mathbb{D}^n}$ (resp. $\b_M$) denotes the connecting homomorphism in the homotopy exact sequence associated to the fibration $|\TOP(\mathbb{D}^n,\b)|\rightarrow \TDD$ (resp. $|\TOP_0(M)|\rightarrow \TDM$). Since $M$ is closed and admits a Riemannian metric with nonpositive sectional curvature and $n\geq 5$, then by \cite[Corollary 4.3]{FW}, the canonical map $\mbox{Diff}(M)\rightarrow \mbox{Top}(M)$ induces surjective homomorphisms on $\pi_i$ for all $i\geq 2$. Furthermore, since the canonical maps
   \begin{gather*}
     \xymatrix{|\TOP_0(M)|\ar[r]_{\cong}&\mbox{Top}_0(M)\ar[r]&\mbox{Top}(M)} \\
     \xymatrix{|\DIFF_0(M)|\ar[r]_{\cong}&\mbox{Diff}_0(M)\ar[r]&\mbox{Diff}(M)}
   \end{gather*}
induce isomorphisms on $\pi_i$ for $i\geq 1$,
then the map $|\DIFF_0(M)|\rightarrow |\TOP_0(M)|$
also induces epimorphisms on $\pi_i$ for $i\geq 2$,
and hence $\b_M:\pi_{i+1}\TDM\rightarrow \pi_i |\DIFF_0(M)|$
is injective, for $i\geq 1$, by the homotopy exact
sequence associated to the fibration
$|\TOP_0(M)|\rightarrow \TDM$.
Consequently, by the commutative diagram (\ref{diag:inj}),
to prove $\iota_*$ is injective, it suffices to show that
$\overline{\iota}_*: \pi_{i+1}\TDD\rightarrow \pi_{i+1}\TDM$
is injective. Hence the claim follows from the
commutativity of the next diagram:
\beq
\xymatrix{\pi_{i+1}\TDD\ar[r]^-{\mu(\mathbb{D}^n)_*}_-{\cong}\ar[d]_{\overline{\iota}_*}&\pi_{i+1}\mapD\ar[d]^{\eta_*}\\
   \pi_{i+1}\TDM\ar[r]^-{\mu(M)_*}_-{\cong}&\pi_{i+1}\mapM}
\eeq
   where $\mu(\mathbb{D}^n)_*$, $\mu(M)_*$ the Morlet isomorphisms\cite[Theorem 4.2]{BL}.
   \end{proof}

 Thus it remains to show $\eta_*$ is injective. Let $\hat{\eta}:M\times \mathbb{D}^{i+1}/\b \rightarrow S^{n+i+1}=\mathbb{D}^n\times \mathbb{D}^{i+1}/\b$ be the degree 1 map which collapses the union of $M\times \b \mathbb{D}^{i+1}$ and the complement of $\nu\times id_{\mathbb{D}^{i+1}}: \mathbb{D}^n\times \mathbb{D}^{i+1}\rightarrow M\times \mathbb{D}^{i+1}$ to a point. Then $\eta_*$ can be identified with the map $\hat{\eta}^*:[S^{n+i+1};*,\frac{\mbox{Top}(n)}{O(n)};*]\rightarrow [M\times \mathbb{D}^{i+1};\b,\frac{\mbox{Top}(n)}{O(n)};*]$. Since $i\geq n$, then by Whitney embedding theorem, there is an embedding $M\subset S^{n+i+1}$ with a closed tubular neighborhood $N(M)$, which induces a degree 1 map $q:S^{n+i+1}\rightarrow N(M)/\b$ collapsing the complement of $N(M)\subset S^{n+i+1}$ into a point. Since $M$ is parallelizable, then $N(M)=M\times \mathbb{D}^{i+1}$.
    ~Now the composition $\hat{\eta}\circ q: S^{n+i+1}\rightarrow M\times \mathbb{D}^{i+1}/\b\rightarrow S^{n+i+1}$ is of degree 1 and hence homotopic to the identity. This shows that $q^*\circ \hat{\eta}^*$ is the identity on $[S^{n+i+1};*,\mbox{Top}(n)/O(n);*]$. Consequently, $\hat{\eta}^*$ and hence $\eta_*$ are injective. This completes the proof of the lemma.
\end{proof}
Recall that a smooth manifold is \textit{stably parallelizable} if the Whitney sum of its tangent bundle with a trivial line bundle is trivial.
\begin{lemma}\label{nonzero}
  Let $M^n$ be a closed stably parallelizable manifold admitting a Riemannian metric with nonpositive sectional curvature and assume $i\geq n+1$. If the image of a class $x\in\pi_{i}^\mathbb{Q}\mbox{Diff}(\mathbb{D}^{n},\partial)$ under the Gromoll-map $$\xymatrix{\pi_{i}^\mathbb{Q}\mbox{Diff}(\mathbb{D}^{n},\partial)\ar[r]^-{\a_{n+1*}^{\Q}}&\pi_{i-1}^\mathbb{Q}\mbox{Diff}(\mathbb{D}^{n+1},\partial)}$$
is nonzero, then $\iota_{*}^{\Q}x\neq
0\in\pi_{i}^\mathbb{Q}\mbox{Diff}_0(M)$ where $\iota:
\mbox{Diff}(\mathbb{D}^n,\b)\rightarrow \mbox{Diff}_0(M)$ is the map
given by extending each $f\in \mbox{Diff}(\mathbb{D}^n,\b)$ by the
identity outside a fixed embedding $\nu:
\mathbb{D}^n\hookrightarrow M$.
\end{lemma}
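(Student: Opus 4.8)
The plan is to deduce this lemma from the parallelizable case, Lemma~\ref{paralnonzero}, by crossing $M$ with a circle. Put $W:=M\times S^{1}$. Its tangent bundle is $TW\cong\mathrm{pr}^{*}TM\oplus\epsilon^{1}$, where $\mathrm{pr}\colon W\to M$ is the projection and $\epsilon^{1}$ is a trivial line bundle; since $M$ is stably parallelizable, $TM\oplus\epsilon^{1}$ is trivial, and hence so is $TW$. Thus $W$ is a closed \emph{parallelizable} manifold of dimension $n+1$, and, being the product of a nonpositively curved manifold with a flat one, it carries a nonpositively curved metric, so it satisfies the hypotheses of Lemma~\ref{paralnonzero}. (We may assume $n\ge4$, since for $n\le3$ the group $\Diff(\D^{n},\partial)$ is contractible and the hypothesis is then vacuous.)

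Next I would record the naturality of the Gromoll map with respect to the extension maps. Fix a closed sub-arc $J\subset S^{1}$ and let $\iota'\colon\Diff(\D^{n+1},\partial)\to\Diff_{0}(W)$ be extension by the identity along the embedding $\nu\times\mathrm{id}_{J}\colon\D^{n+1}=\D^{n}\times J\hookrightarrow M\times S^{1}=W$. Let
\begin{equation*}
\sigma\colon\Omega\Diff_{0}(M)\longrightarrow\Diff_{0}(W),\qquad\sigma(\gamma)(y,\theta)=\bigl(\gamma_{\theta}(y),\theta\bigr)
\end{equation*}
send a based loop $\theta\mapsto\gamma_{\theta}$ to the induced diffeomorphism of $W$. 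Representing homotopy classes by loops $\theta\mapsto f_{\theta}$ in $\Diff(\D^{n},\partial)$ that are constant equal to $1_{\D^{n}}$ for $\theta$ outside $J$ (which changes nothing up to homotopy), one checks straight from the definition of $\a_{n+1}$ that the square
\begin{equation*}
\xymatrix{
\Omega\Diff(\D^{n},\partial)\ar[r]^-{\a_{n+1}}\ar[d]_{\Omega\iota} & \Diff(\D^{n+1},\partial)\ar[d]^{\iota'}\\
\Omega\Diff_{0}(M)\ar[r]^-{\sigma} & \Diff_{0}(W)
}
\end{equation*}
commutes: both composites carry such an $f$ to the diffeomorphism of $W$ that equals $(y,\theta)\mapsto(\nu f_{\theta}\nu^{-1}(y),\theta)$ on $\nu(\D^{n})\times J$ and the identity elsewhere. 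Here $\iota$ denotes the map in the statement.

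Finally I would apply $\pi_{i-1}(-)\otimes\Q$ to this square. Under the natural adjunction isomorphism $\pi_{i}(X)\cong\pi_{i-1}(\Omega X)$ the top arrow induces the Gromoll homomorphism $\a_{n+1*}^{\Q}$ and the left arrow induces $\iota_{*}^{\Q}$, so the square reads $\iota'_{*}\circ\a_{n+1*}^{\Q}=\sigma_{*}\circ\iota_{*}^{\Q}$ on $\pi_{i}^{\Q}\Diff(\D^{n},\partial)$. By hypothesis $\a_{n+1*}^{\Q}x\ne0$ in $\pi_{i-1}^{\Q}\Diff(\D^{n+1},\partial)$; since $i\ge n+1$ we have $i-1\ge n$, and for $i\ge n+2$ we have $i-1\ge n+1=\dim W$, so Lemma~\ref{paralnonzero} applied to $W$ shows that $\iota'_{*}$ is injective, hence rationally injective. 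Thus $\iota'_{*}(\a_{n+1*}^{\Q}x)\ne0$, therefore $\sigma_{*}(\iota_{*}^{\Q}x)\ne0$, and so $\iota_{*}^{\Q}x\ne0$ in $\pi_{i}^{\Q}\Diff_{0}(M)$, as desired. The single borderline case $i=n+1$, where the relevant degree is $\dim W-1$, is handled by a minor variant of Lemma~\ref{paralnonzero} for $W$: the inequality ``degree $\ge$ dimension'' enters that proof only to make the normal bundle of a Whitney embedding $W\hookrightarrow S^{n+i+1}$ trivial, which one checks directly here because that bundle is stably trivial (as $W$ is parallelizable) and has vanishing Euler number (as $W$ can be displaced from itself in the sphere). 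I expect the main obstacle to be the bookkeeping in the second step: $J$, the basepoints and the normalization of the loops must be chosen so that the two composites in the square agree literally, not just up to homotopy, and the adjunction isomorphisms must be tracked so that the lower-left corner is genuinely identified with $\iota_{*}^{\Q}x$; granting this, the rest is formal.
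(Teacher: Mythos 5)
Your proof is essentially the paper's proof, and you also spot a subtlety the paper passes over silently. The paper defines $\sigma\colon\Omega\Diff_0(M)\to\Diff_0(M\times[0,1],\partial)$ and $\tau\colon\Diff(\D^{n+1},\partial)\to\Diff_0(M\times[0,1],\partial)$, obtains the commutative square $\tau_*\circ\alpha_{n+1*}=\sigma_*\circ\iota_*$, and then shows $\tau_*$ is injective by post-composing with $q\colon\Diff_0(M\times[0,1],\partial)\to\Diff_0(M\times S^1)$ and applying Lemma~\ref{paralnonzero} to the parallelizable, nonpositively curved manifold $M\times S^1$. Your version collapses the two steps: your $\iota'$ is $q\circ\tau$ and your $\sigma$ is $q\circ(\text{the paper's }\sigma)$, landing directly in $\Diff_0(M\times S^1)$. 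This is the same argument with one fewer intermediate space; both are correct.

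The borderline case you flag is real and is present in the paper as well. Lemma~\ref{paralnonzero} applied to $W=M\times S^1$ of dimension $n+1$ at homotopy degree $i-1$ requires $i-1\ge n+1$, i.e.\ $i\ge n+2$, whereas the hypothesis of Lemma~\ref{nonzero} is $i\ge n+1$. The paper applies Lemma~\ref{paralnonzero} at degree $i-1$ without comment on the case $i=n+1$, so technically it has the same gap; you are right to point it out. (Fortunately this does not harm the paper's main application: in Lemma~\ref{le:app} the congruence $i\equiv -n+2\bmod 4$ forces $i+n\equiv 2\bmod 4$, while $i=n+1$ would give $i+n=2n+1$ odd, so $i=n+1$ never actually arises there.) Your proposed patch, however, is not fully airtight. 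The normal bundle of $W^{n+1}\subset S^{2n+2}$ has rank $n+1=\dim W$; it is stably trivial, and its Euler number vanishes for the reason you give. When $n+1$ is even this does suffice, since the only obstruction to destabilizing a stably trivial rank-$m$ oriented bundle over a closed oriented $m$-manifold with $m$ even is the Euler class. But when $n+1$ is odd the Euler class of any rank-$(n+1)$ bundle is automatically $2$-torsion, so ``vanishing Euler number'' carries no information, and there is in general a residual $\Z/2$-valued obstruction in $\pi_{n+1}(\mathrm{SO}/\mathrm{SO}(n+1))\cong\Z/2$ that your argument does not address. So the patch as stated works for $n$ odd; for $n$ even you would need a separate argument (or simply restrict the lemma to $i\ge n+2$, which is what is actually used downstream).
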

\begin{proof}
  The theorem is vacuous if $n=1,n=2$ by Smale\cite{S59} and if $n=3$ by Hatcher\cite{H}. Hence we may assume $n\geq 4$. Analogous to the map $\a_{n+1}$, a map $\sigma: \Omega \mbox{Diff}(M)\rightarrow \mbox{Diff}(M\times [0,1],\b)$ can be defined such that the following diagram commutes:
\begin{equation}\label{diag:nonzero}
\xymatrix{\pi_{i}\mbox{Diff}(\mathbb{D}^n,\b)\ar[r]^-{\a_{n+1*}}\ar[d]_{\iota_*}&\pi_{i-1}\mbox{Diff}(\mathbb{D}^{n+1},\b)\ar[d]^{\tau_*}\\
\pi_{i}\mbox{Diff}_0(M)\ar[r]^-{\sigma_*}&\pi_{i-1}\mbox{Diff}_0(M\times [0,1],\b)}
\end{equation}
where $\tau$ is the map given by extending each diffeomorphism in
$\mbox{Diff}(\mathbb{D}^{n+1},\b)$ via the identity outside the
embedding $$\xymatrix{\mathbb{D}^{n+1}=\mathbb{D}^n\times
[0,1]~\ar[r]^-{\nu\times id_{[0,1]} }&~M\times [0,1]}.$$ By the
commutativity of the diagram (\ref{diag:nonzero}), it suffices to
show $\tau_*^{\Q}\a_{n+1*}^{\Q}x\neq 0$. This can be seen as
follows: Let $q:\mbox{Diff}_0(M\times [0,1],\b)\rightarrow
\mbox{Diff}_0(M\times S^1)$ be the map given by extending each
diffeomorphism in $\mbox{Diff}_0(M\times [0,1],\b)$ via the identity
outside a fixed embedding $M\times [0,1]\subset M\times S^1$. Since
$M\times S^1$ is parallelizable and admits a Riemannian metric with
nonpositive sectional curvature, then Lemma \ref{paralnonzero}
yields that $(q\circ
\tau)_*:\pi_{i-1}\mbox{Diff}(\mathbb{D}^{n+1},\b)\rightarrow
\pi_{i-1}\mbox{Diff}_0(M\times S^1)$ is injective, which implies
that $\tau_*$ is injective and so must be $\tau_*\otimes id_{\Q}$.
Hence $$\tau_*^{\Q}\a_{n+1*}^{\Q}x\neq 0.$$ This completes the
proof.
\end{proof}

\begin{rmk}
  The condition $\a_{n+1*}^{\Q}x\neq 0$ is never satisfied inside Igusa stable range by \cite{FH78}.
\end{rmk}

\section{Elements in the image of the forgetful homomorphism  $F^{\Q}: \pi_{*}\mathcal{T}^{<0}(M)\otimes \Q\rightarrow \pi_{*}\mathcal{T}(M)\otimes \Q$}
In this section, we give a sufficient condition (in Lemma \ref{FO} below)
for an element in $\pi^{\mathbb{Q}}_{*}\mathcal{T}(M)$ to belong to
the image of $F_*^{\Q}:
\pi^{\mathbb{Q}}_{*}\mathcal{T}^{<0}(M)\rightarrow
\pi^{\mathbb{Q}}_{*}\mathcal{T}(M)$ and use it together with Lemma
\ref{nonzero} to prove Theorem \ref{mainth}.

For any embedding
$\nu:\mathbb{D}^n\rightarrow M$, let
$$\iota(\nu):\mbox{Diff}(\mathbb{D}^n,\b)\rightarrow
\mbox{Diff}_0(M)$$ be the map given by extending each $\phi\in
\mbox{Diff}(\mathbb{D}^n,\b)$ by the identity outside the embedding
$\nu:\mathbb{D}^n\rightarrow M$. The following lemma is a direct
consequence of \cite[Theorem 2]{FO09}.
\begin{lemma}\label{FO}
Let $i$ be a nonnegative integer and $n$ be a positive integer.
Suppose that a class
$\overline{x}\in\pi_{i}\mbox{Diff}(\mathbb{D}^n,\b)$ lies in the
image of the Gromoll homomorphism
$$\alpha_{n*}:\pi_{i+1}\mbox{Diff}(\mathbb{D}^{n-1},\b)\rightarrow\pi_{i}\mbox{Diff}(\mathbb{D}^n,\b).$$
Then there exists a real number $r>0$ with the following property:
for any closed real hyperbolic manifold $(M,g)$ with injectivity
radius at least $3r$ at some point, there is an embedding
$\nu:\mathbb{D}^n\rightarrow M$ such that the class
$$\iota(\nu)_*\overline{x}\in
\pi_i\mbox{Diff}_0(M)=\pi_{i+1}\mathcal{T}(M)$$ is in the image of
the forget structure homomorphism $F_*:
\pi_{i+1}\mathcal{T}^{<0}(M)\rightarrow \pi_{i+1}\mathcal{T}(M)$.
\end{lemma}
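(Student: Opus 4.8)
The plan is to derive the lemma from \cite[Theorem 2]{FO09} through the bundle-theoretic description of $\mathcal{T}(M)$ and $\mathcal{T}^{<0}(M)$ recalled in the introduction. Since $M$ is hyperbolic, $\mathcal{T}(M)=B\mbox{Diff}_0(M)$ classifies smooth $M$-bundles equipped with a fiber homotopy trivialization and $\mathcal{T}^{<0}(M)$ classifies the negatively curved ones; the fiber homotopy trivialization is moreover unique up to homotopy, $M$ being aspherical with centerless fundamental group. Hence a class $y\in\pi_{i+1}\mathcal{T}(M)=\pi_i\mbox{Diff}_0(M)$ is represented via clutching by a smooth $M$-bundle $E_y\to S^{i+1}$, and $y\in\mbox{Im}(F_*)$ if and only if $E_y\to S^{i+1}$ admits a fiberwise negatively curved metric. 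If $\phi\colon S^i\to\mbox{Diff}(\mathbb{D}^n,\partial)$ represents $\overline{x}$, then $E_{\iota(\nu)_*\overline{x}}$ is the trivial bundle $(M\setminus\mbox{int}\,\nu(\mathbb{D}^n))\times S^{i+1}$ with the $\mathbb{D}^n$-bundle clutched from $\phi$ glued back along $\partial\nu(\mathbb{D}^n)\times S^{i+1}$; in particular the twisting is supported inside an arbitrarily small ball of $M$.

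The hypothesis lets us write $\overline{x}=\alpha_{n*}(\overline{y})$ with $\overline{y}\in\pi_{i+1}\mbox{Diff}(\mathbb{D}^{n-1},\partial)$, and the Gromoll form of $\overline{x}$ is exactly what makes the construction of \cite{FO09} applicable: a representative of $\alpha_{n*}(\overline{y})$ factors through loops of diffeomorphisms of $\mathbb{D}^{n-1}$, so the clutching twist can be spread out as a ``Gromoll twist'' over a long radial collar; away from that collar one keeps the fixed hyperbolic metric on each fiber of $E_{\iota(\nu)_*\overline{x}}$, and on the collar one takes a small, controlled perturbation, retaining negative sectional curvature --- provided the collar has room inside the fiber. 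That room requirement is what produces the constant: \cite[Theorem 2]{FO09}, applied to $\overline{y}$, yields $r=r(\overline{y})>0$ such that whenever $(M,g)$ is closed real hyperbolic with injectivity radius at least $3r$ at some point $p$, there is an embedding $\nu\colon\mathbb{D}^n\hookrightarrow M$ onto essentially the metric $r$-ball about $p$ for which $E_{\iota(\nu)_*\overline{x}}\to S^{i+1}$ carries a fiberwise negatively curved metric.

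That metric, together with the (essentially unique) fiber homotopy trivialization, represents a class in $\pi_{i+1}\mathcal{T}^{<0}(M)$ whose image under $F_*$ is $\iota(\nu)_*\overline{x}$, which is the assertion. The genuine geometric content --- bounding the sectional curvature of the grafted metric, where one must control how the derivatives of the twisting family enter the curvature tensor --- lies entirely inside \cite[Theorem 2]{FO09}, which we are entitled to invoke. The remaining task is purely bookkeeping: identifying $E_{\iota(\nu)_*\overline{x}}$ with the bundle on which \cite{FO09} builds its metric, and checking that passing to $\mathcal{T}(M)$ and $\mathcal{T}^{<0}(M)$ (rather than the relevant diffeomorphism and metric spaces) adds no constraint from the fiber homotopy trivialization. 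I expect this, and not any new estimate, to be the only --- and minor --- obstacle.
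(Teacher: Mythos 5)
Your proposal follows essentially the same route as the paper's proof: both reduce the lemma to \cite[Theorem 2]{FO09}, with the Gromoll factorization $\overline{x}=\alpha_{n*}(\overline{y})$ being exactly what puts a representative into the required subspace of isotopies so that that theorem applies, and the injectivity-radius hypothesis supplying the room for the embedding of $\mathbb{D}^n$. The paper phrases the reduction via the exact sequence $\pi_{i+1}\mathcal{T}^{<0}(M)\xrightarrow{F_*}\pi_{i+1}\mathcal{T}(M)=\pi_i\mbox{Diff}_0(M)\xrightarrow{\Gamma_{g*}}\pi_i\mathcal{MET}^{<0}(M)$ and introduces the explicit maps $j$ and $\Lambda(M,p,r)$ with compact set $K=\mbox{im}(j\circ\alpha_n\circ f)\subset\mathcal{G}$, which is the precise form of the ``bookkeeping'' you correctly flag but leave implicit; your bundle-theoretic translation (fiberwise negatively curved metric on $E_{\iota(\nu)_*\overline{x}}$, with the appropriate basepoint convention) is an equivalent reformulation of the vanishing of $\Gamma_{g*}\iota(\nu)_*\overline{x}$.
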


  The proof of this lemma is given below, after we make the following considerations. Let $$\Gamma_{g}: \mbox{Diff}_0(M)\rightarrow \MET^{<0}(M)$$ denote the orbit map given by $\Gamma_{g}(\phi)=(\phi^{-1})^*g$. Since the sequence $$\xymatrix{\pi_{i+1}\mathcal{T}^{<0}(M)\ar[r]^-{F_*}&\pi_{i+1}\mathcal{T}(M)=\pi_{i}\mbox{Diff}_0(M)\ar[r]^-{\Gamma_{g*}}&\pi_{i}\MET^{<0}(M)}$$ is exact (c.f. \cite{FO09,F}), then to show Lemma \ref{FO} it suffices to prove that there is an
embedding $\nu:\mathbb{D}^n\rightarrow M$ such that the image of
$\overline{x}$ under the composition
$$\xymatrix{\pi_{i}\mbox{Diff}(\mathbb{D}^n,\b)\ar[r]^-{\iota(\nu)_*}&\pi_{i}\mbox{Diff}_0(M)\ar[r]^-{\Gamma_{g*}}&\pi_{i}\MET^{<0}(M)}$$
is zero. To see how this follows, we review some notions from
\cite{FO09}.

Let $\mbox{Diff}_0(S^{n-1}\times [1,2],\b)$ denote the group of all
self-diffeomorphisms of $S^{n-1}\times [1,2]$ that are the identity
near $S^{n-1}\times \{1,2\}$ and are homotopic to the identity by a
homotopy that is constant near $S^{n-1}\times\{1,2\}$. Let
$\mathcal{G}$ be the subgroup of $\mbox{Diff}_0(S^{n-1}\times
[1,2],\b)$ whose elements are all smooth isotopies $\phi$ of
$S^{n-1}$, namely $\phi(\cdot, t)\in \mbox{Diff}(S^{n-1})$ for all
$t\in[1,2]$. Let $N$ be a real hyperbolic manifold of dimension $n$,
with a geodesic ball $B$ of radius $2r$ centered at some point $p\in
N$, hence $B\backslash p$ can be identified with $S^{n-1}\times
(0,2r]$. Furthermore, identify $S^{n-1}\times [r,2r]$ with
$S^{n-1}\times [1,2]$ by $(x,t)\mapsto (x,\frac{t}{r})$ for any
$(x,t)\in[r,2r]$. Then under these identifications, a map
$$\Lambda(N,p,r):\mbox{Diff}_0(S^{n-1}\times [1,2],\b)\rightarrow
\mbox{Diff}_0(N)$$ can be defined by extending each $\varphi\in
\mbox{Diff}_0(S^{n-1}\times [1,2],\b)$ via the identity outside
$S^{n-1}\times [r,2r]\subset N$. Now we can restate a special case
of \cite[Theorem 2]{FO09} as follows:

\begin{theorem}[Farrell-Ontaneda]\label{th:FO09}Given a compact subset $K\subset \mathcal{G}$, there is a real number $r>0$ such that the following holds: let $(N,g)$ be a closed real hyperbolic manifold and let $p\in N$ with radius of injectivity at $p$ larger than $3r$. Then the map $$\xymatrix{K~\ar[r]^-{\Lambda(N,p,r)}&~\mbox{Diff}_0(N)~\ar[r]^-{\Gamma_g}&~\MET^{<0}(N)}$$
is homotopic to a constant map.
\end{theorem}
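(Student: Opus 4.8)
As the excerpt makes clear, this is the special case of \cite[Theorem~2]{FO09} that the later arguments need, so the plan is to import the Farrell--Ontaneda argument. I would begin by translating the hypotheses into geodesic polar coordinates at $p$. Since the radius of injectivity of $(N,g)$ at $p$ exceeds $3r$, the exponential map $\exp_p$ identifies $(B(p,3r),g)$ isometrically with the $3r$-ball in hyperbolic space $\mathbb{H}^n$, in which $g$ reads $dt^2+\sinh^2(t)\,d\sigma^2$ over $S^{n-1}\times(0,3r]$ ($d\sigma^2$ the round metric of curvature $1$). In these coordinates an element $\phi\in\mathcal{G}$ --- an isotopy of $S^{n-1}$, i.e.\ a loop $s\mapsto\phi_s$ in $\mbox{Diff}(S^{n-1})$ with $\phi_s=\mathrm{id}$ for $s$ near $1$ and near $2$ --- turns $\Lambda(N,p,r)(\phi)$ into the radially sliced diffeomorphism $(x,t)\mapsto(\phi_{t/r}(x),t)$ on the annulus $S^{n-1}\times[r,2r]$ and the identity elsewhere. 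Consequently $g_\phi:=\Gamma_g(\Lambda(N,p,r)(\phi))=(\Lambda(N,p,r)(\phi)^{-1})^*g$ agrees with $g$ off that annulus, while on the annulus it is assembled from $\phi_{t/r}$ and its $t$-derivatives; the crucial bookkeeping is that each radial differentiation of $t\mapsto\phi_{t/r}$ brings out a factor $1/r$, so that the radial derivatives of the relevant data are $O(1/r)$ uniformly over the compact set $K$.

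What has to be produced next is a continuous map $H\colon K\times[0,1]\to\MET^{<0}(N)$ with $H(\cdot,0)=\Gamma_g\circ\Lambda(N,p,r)|_K$ and $H(\cdot,1)$ the constant map at $g$. Requiring every metric in the family to agree with $g$ outside $B(p,3r)$, the task becomes: deform $g_\phi$ inside that ball, keeping it hyperbolic near the boundary, through negatively curved metrics into the hyperbolic metric, continuously in $\phi\in K$. Here I would invoke the mechanism of \cite{FO09}: the annulus $S^{n-1}\times[r,2r]$ is long, of radial length $r$, and lives exactly where the hyperbolic metric expands exponentially, and this ``room'' is what makes the deformation possible. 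Its negative curvature is controlled throughout by Ontaneda's warped-product curvature estimate, whose content is that a metric on $S^{n-1}\times[r,2r]$ of warped-product type $dt^2+f(t)^2\,k_t$, together with the admissible radial cross terms, is negatively curved as soon as $f$ grows at least like $\sinh$, the radial derivatives of $k_t$ and of the cross terms are $O(1/r)$, and $k_t$ stays in a fixed compact $C^2$-neighbourhood of $d\sigma^2$: on the annulus $\sinh t$ and $\cosh t$ are then so large that they dominate the uniformly bounded intrinsic curvature of $k_t$, while the $O(1/r)$ radial terms shift the sectional curvatures only by $O(1/r)$. The injectivity-radius hypothesis is precisely what lets $r$ be taken as large as this estimate demands, and in the applications it is arranged by passing to a finite sheeted cover.

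Carrying this out, one fixes $r$ beyond the threshold that the estimate requires (a threshold depending only on a $C^3$-bound for the elements of $K$), obtains $H$, and concludes that $\Gamma_g\circ\Lambda(N,p,r)|_K$ is homotopic to a constant map. The principal obstacle, and the real substance of the theorem, is this negative-curvature control: one must keep \emph{every} metric arising in the deformation negatively curved, not merely the two endpoints, and all the bounds uniform over the compact family $K$. This is where the hypothesis $K\subset\mathcal{G}$ is indispensable --- it is the radially sliced (isotopy) form of $\Lambda(N,p,r)(\phi)$ that brings $g_\phi$ within reach of a warped-product description, hence of Ontaneda's estimate, whereas a general diffeomorphism supported in a ball would resist such control.
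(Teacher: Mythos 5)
The paper offers no proof of this statement: it is introduced with the words ``we can restate a special case of \cite[Theorem 2]{FO09} as follows'' and is used as a black box. Your proposal correctly identifies this and then goes further, attempting to reconstruct the internal Farrell--Ontaneda argument. The setting you describe is accurate --- polar coordinates at $p$, the hyperbolic metric $dt^2+\sinh^2(t)\,d\sigma^2$, the radially sliced form $(x,t)\mapsto(\phi_{t/r}(x),t)$ of $\Lambda(N,p,r)(\phi)$ on $S^{n-1}\times[r,2r]$, the $O(1/r)$ gain on radial derivatives, and the role of a curvature estimate for warped-product-type metrics that is uniform over the compact family $K$. These are indeed the ingredients of \cite{FO09}.

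The genuine gap is that the null-homotopy $H\colon K\times[0,1]\to\MET^{<0}(N)$ is never defined. The curvature estimate you invoke is only a \emph{certificate}: given a candidate family of metrics of the stated form, it confirms negative curvature. It does not produce the interpolating family between $g_\phi$ and $g$, and producing it is the actual content of \cite[Theorem 2]{FO09}. Note that the obvious attempt --- shrinking the isotopy parameter, say replacing $\phi_{t/r}$ by $\phi_{(1-s)(t/r)+s}$ --- destroys the boundary condition: the resulting metric no longer agrees with $g$ near the outer sphere $S^{n-1}\times\{2r\}$, so it does not glue to a smooth metric on $N$. Farrell--Ontaneda get around this with a specific construction that exploits the extra room guaranteed by the hypothesis (injectivity radius $>3r$ while the support sits in $[r,2r]$), spreading and sliding the isotopy through the collar while keeping the metric hyperbolic near both ends of the modified region; your sketch alludes to this ``room'' but does not say what is done with it. As written, the proposal reduces the theorem to ``invoke the mechanism of \cite{FO09},'' which is no more than the citation the paper itself gives. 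For the purposes of this paper that citation is an acceptable proof; as a self-contained argument, the central step is missing.
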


\begin{proof}[Proof of Lemma \ref{FO}] Identify $\mathbb{D}^{n-1}$ with the northern hemisphere of the sphere $S^{n-1}$. Then this induces an inclusion $\mathbb{D}^n=\mathbb{D}^{n-1}\times[1,2]\rightarrow S^{n-1}\times[1,2]$, which gives rise to a map $$j:\mbox{Diff}(\mathbb{D}^n,\b)\rightarrow \mbox{Diff}_0(S^{n-1}\times[1,2],\b).$$
Let $f:S^i\rightarrow \Omega \mbox{Diff}(\mathbb{D}^{n-1},\b)$ be a
continuous map such that $\a_n\circ f$ represents the homotopy class
$\overline{x}$ and let $K=\{(j\circ \a_n\circ f)(u)|u\in S^i\}$.
Then $K$ is a compact subset of $\mathcal{G}$ and hence Theorem
\ref{th:FO09} applies. Let $r$ be the real number in Theorem
\ref{th:FO09}. By the assumption of Lemma \ref{FO} we can find a
$p\in M$ with radius of injectivity at $p$ larger than $3r$. Theorem
\ref{th:FO09} implies that the homomorphism $(\Gamma_g\circ
\Lambda(M,p,r))_*: \pi_i K\rightarrow \pi_i\MET^{<0}(M)$ is trivial.
In particular, if we define $\psi:S^i\rightarrow K$ by
$\psi(u)=j(\a_n(f(u)))$ then the composition $\Gamma_g\circ
\Lambda(M,p,r)\circ\psi$ is null-homotopic. Let
$\iota:=\Lambda(M,p,r)\circ j$, then $\iota$ is obviously equal to
$\iota(\nu)$ for some embedding $\nu:\mathbb{D}^n\rightarrow M$.
Therefore the composition of maps $\Gamma_g\circ\iota\circ\a_n\circ
f$ is null-homotopic, which implies
$\Gamma_{g*}\iota_*\overline{x}=0$. This completes the proof of the
lemma.

\end{proof}

Now we can prove Theorem \ref{mainth}.

\begin{proof}[Proof of Theorem \ref{mainth}]

By the assumption of Theorem \ref{mainth}, there exists
$y\in\pi_{i+1}^{\Q}\mbox{Diff}(\mathbb{D}^{n-1},\b)$ such
that $(\a_{n+1}\circ \a_n)_*^{\Q}y\neq 0$ and
hence there is $N>>0$ and $\overline{y}\in
\pi_{i+1}\mbox{Diff}(\mathbb{D}^{n-1},\b)$ such that
$$N\cdot y=\overline{y}\otimes 1\in \pi_{i+1}^{\Q}\mbox{Diff}(\mathbb{D}^{n-1},\b).$$
Let $\overline{x}=\a_{n*}\overline{y}\in \pi_{i}\mbox{Diff}(\mathbb{D}^n,\b)$,
then Lemma \ref{FO} applies.
Given a closed real hyperbolic manifold $N$, there is a finite sheeted cover of $M$ which is
stably parallelizable (the proof of this result is sketched in
\cite[p.553]{Su79}. Another proof appears in \cite{Okun}).
 We can further pass to another
finite sheeted cover $\tilde{M}$ of the latter so that
$\tilde{M}$ has a geodesic ball with large enough
radius as required in Lemma \ref{FO}
(c.f. \cite[p.901]{FJ89}). Then for any finite sheeted
cover $\hat{M}$ of $\tilde{M}$ there is an embedding
$\nu:\mathbb{D}^n\rightarrow \hat{M}$ such that $\iota(\nu)_*\overline{x}$ lives in the image of the homomorphism $F_*:\pi_{i+1}\mathcal{T}^{<0}(\hat{M})\rightarrow\pi_{i+1}\mathcal{T}(\hat{M})$.
Let now $x=\overline{x}\otimes 1\in\pi_i^{\Q}\mbox{Diff}(\mathbb{D}^n,\b)$. Then $\iota(\nu)_*^{\Q}x=\iota(\nu)_*\overline{x}\otimes 1$ lies in the image of the homomorphism $F_*^{\Q}:\pi_{i+1}^{\Q}\mathcal{T}^{<0}(\hat{M})\rightarrow\pi_{i+1}^{\Q}\mathcal{T}(\hat{M})$ and $$\a_{n+1*}^{\Q}x=N(\a_{n+1}\circ\a_n)_*^{\Q}y\neq 0.$$ Also by Lemma \ref{nonzero}, the class $\iota_*^{\Q}x\in \pi_i^{\Q}\mbox{Diff}_0(\hat{M})$ is nonzero. This completes the proof of the theorem.
\end{proof}

\section{An example coming from Pontryagin classes in $H^*(B\mbox{Top}(m);\mathbb{Q})$}\label{se:app_mainth}

In this section we apply a recent striking result of M. Weiss
\cite{W15} to exhibit examples where the conditions of Theorem
\ref{mainth} are satisfied. As a byproduct we will obtain a proof of
Theorem \ref{app_Weiss class}.

Let $B\mbox{Top}(m)$ denote the classifying space of
$\mbox{Top}(m)$. The inclusion
$\mbox{Top}(m)\hookrightarrow\mbox{Top}(m+1)$ induces the map
$B\mbox{Top}(m)\hookrightarrow B\mbox{Top}(m+1)$. Let
$B\mbox{Top}:=\varinjlim B\mbox{Top}(m)$ and
$j_l:B\mbox{Top}(l)\rightarrow B\mbox{Top}$ be the canonical map.
Denote by $p_i\in H^{4i}(B\mbox{Top};\Q)$ the $i-$th universal
rational Pontryagin class.
\begin{theorem}[Weiss \label{th:Weiss}\cite{W15}]
  There exist positive constants $c_1$ and $c_2$ such that, for all positive integers $m$ and $k$ where $m\geq c_1$ and $k<5m/4-c_2$, the rational Pontryagin class $p_{m+k}\in H^{4m+4k}(B\mbox{Top}(2m);\Q)$ evaluates nontrivially on $\pi^{\Q}_{4m+4k}(B\mbox{Top}(2m))$.
\end{theorem}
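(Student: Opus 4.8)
The plan is to translate the statement into a question about the homotopy fibre of the comparison map $BO(2m)\to B\mbox{Top}(2m)$, rephrase that fibre by smoothing theory, and then isolate the genuinely unstable computation that does the real work, which is the content of \cite{W15}.

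First I would set up the reduction. Since $p_{m+k}$ has degree $4m+4k>2m$, its image in $H^{4m+4k}(BO(2m);\Q)$ is zero, because the Pontryagin classes of a rank $2m$ real vector bundle vanish above degree $2m$. Hence in the fibration $\mbox{Top}(2m)/O(2m)\to BO(2m)\xrightarrow{\,f\,}B\mbox{Top}(2m)$ the pairing $\langle p_{m+k},-\rangle$ annihilates the image of $f_*$ on rational homotopy, so it factors through the connecting homomorphism $\partial\colon\pi_{4m+4k}(B\mbox{Top}(2m))\otimes\Q\to\pi_{4m+4k-1}(\mbox{Top}(2m)/O(2m))\otimes\Q$; equivalently $p_{m+k}$ transgresses from a class $\widetilde p_{m+k}\in H^{4m+4k-1}(\mbox{Top}(2m)/O(2m);\Q)$. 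Thus the theorem becomes: produce a rational homotopy class of $\mbox{Top}(2m)/O(2m)$ in degree $4m+4k-1$ which (a) lifts along $\partial$, i.e. maps to $0$ in $\pi_*(BO(2m))\otimes\Q$, and (b) pairs nontrivially with $\widetilde p_{m+k}$ — and not merely with the lower Pontryagin classes and the Euler class, which are already seen on $BO(2m)$.

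Next I would bring in Morlet's comparison theorem — the incarnation of which already appears in Lemma \ref{paralnonzero}, see \cite{BL} — namely $\mbox{Diff}(\D^{2m},\b)\simeq\Omega^{2m+1}\big(\mbox{Top}(2m)/O(2m)\big)$. Under this the class sought corresponds to a nonzero element of $\pi_{2m+4k-2}(\mbox{Diff}(\D^{2m},\b))\otimes\Q$, and $\widetilde p_{m+k}$ becomes the higher rational Pontryagin class that a $\mbox{Diff}(\D^{2m},\b)$-bundle induces on the vertical tangent bundle of its total space; so (b) is detectable, after spreading the disc bundle out over a closed parallelisable manifold and then over $S^{4m+4k}$ as in the constructions of the preceding sections, by an ordinary rational Pontryagin number. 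The crucial point is that the degree $2m+4k-2$ lies far above the concordance stable range (which is $\lesssim 2m/3$), and that inside that range the relevant rational homotopy — of $\mbox{Diff}(\D^{2m},\b)$, equivalently the Gromoll-stabilised classes — vanishes, by Farrell--Hsiang \cite{FH78}: this is exactly the homotopy-theoretic shadow of the classical vanishing of $p_i$ above degree $\mathrm{rk}/2$, and the reason (noted in the remark after Lemma \ref{nonzero}) that the hypothesis of Theorem \ref{mainth} sits outside the Igusa stable range. So no stable-range (pseudoisotopy or algebraic $K$-theory) input can produce the class; the argument must be genuinely unstable.

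The unstable computation is the heart of \cite{W15}, and I would not expect to improve on it; the route I would take is to stay on $B\mbox{Top}(2m)$ together with its universal $\R^{2m}$-bundle and analyse them by a calculus of functors — Weiss's orthogonal calculus applied to $V\mapsto B\mbox{Top}(V)$ on euclidean spaces, or manifold/embedding calculus for spaces of self-embeddings of $\R^{2m}$ with a Pontryagin--Thom scanning map. The Taylor tower approximates $B\mbox{Top}(2m)$ in a range far wider than the concordance stable range; its layers are infinite loop spaces of explicit spectra built from configuration spaces, whose rational homotopy is computable; one extracts a class of the correct degree from a suitable layer, checks that it persists to $\pi_{4m+4k}(B\mbox{Top}(2m))\otimes\Q$, and verifies that $p_{m+k}$ sees it — for instance by realising it by a concrete $\mbox{Top}(2m)$-bundle over $S^{4m+4k}$ obtained by plumbing and computing a signature or an $\widehat A$-number of an associated closed fibre bundle. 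I expect the real obstacles to be, first, the connectivity and convergence estimates in the tower, which are what pin down the precise bound $k<5m/4-c_2$, and second, the verification of (b) — that the detection is by $p_{m+k}$ rather than by classes pulled back from $BO(2m)$ — which is delicate precisely because, away from the stable range, one has no independent computational hold on $\pi_*(\mbox{Diff}(\D^{2m},\b))$.
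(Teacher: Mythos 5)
This statement is a verbatim quotation of a theorem of Weiss; the paper imports it from \cite{W15} as a black box and supplies no proof of its own, so there is no ``paper's own proof'' to compare your argument against. Your text is, accordingly, not a proof but an annotated summary of the strategy one would pursue (and, broadly, the strategy Weiss pursues): you perform the transgression reduction from $B\mbox{Top}(2m)$ to $\mbox{Top}(2m)/O(2m)$, translate through the Morlet equivalence $\mbox{Diff}(\mathbb{D}^{2m},\partial)\simeq\Omega^{2m+1}\bigl(\mbox{Top}(2m)/O(2m)\bigr)$, observe correctly that the target degree lies far outside the concordance/Igusa stable range so that no pseudoisotopy or algebraic $K$-theory input can produce the class, and then defer the genuinely unstable computation to Weiss's calculus-of-functors analysis, which you explicitly decline to reproduce. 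That preamble is consistent with how the present paper uses the theorem (the diagram in the proof of Lemma~\ref{le:app} carries out essentially the same $B\mbox{Top}(l)\leftrightarrow\mbox{Top}(l)/O(l)\leftrightarrow\mbox{Diff}(\mathbb{D}^{l},\partial)$ bookkeeping), but since you leave the heart of the matter untouched, this should be understood as a reduction and contextualisation, not a proof.

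One local slip worth fixing: you justify $f^*p_{m+k}=0$ in $H^{4m+4k}(BO(2m);\Q)$ by asserting that the rational Pontryagin classes of a rank-$2m$ bundle ``vanish above degree $2m$.'' The correct statement is that $p_j$ of a rank-$2m$ real bundle vanishes for $j>m$, i.e. in cohomological degree greater than $4m$. Since $k\geq 1$ here we have $m+k>m$, so the conclusion you need still holds, but the bound you wrote is off by a factor of two.
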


Let $c_1$ and $c_2$ be the positive numbers given by Theorem \ref{th:Weiss}, then we have the following lemma.
\begin{lemma}\label{le:app}
  Let $n$ and $i$ be nonnegative integers satisfying the conditions $n\geq 2c_1+2$, $i\equiv-n+2\mod 4$ and $n\leq i<\frac{7}{2}n-11-4c_2$, then the composite \begin{equation}\label{diag:W}
    \xymatrix{\pi^{\mathbb{Q}}_{i+1}\mbox{Diff}(\mathbb{D}^{n-1},\partial)\ar[r]^-{\a_{n*}^{\Q}}&\pi^{\mathbb{Q}}_{i}\mbox{Diff}(\mathbb{D}^{n},\partial)\ar[r]^-{\a_{n+1*}^{\Q}}&\pi^{\mathbb{Q}}_{i-1}\mbox{Diff}(\mathbb{D}^{n+1},\partial)}
  \end{equation} is nontrivial.
\end{lemma}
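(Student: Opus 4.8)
The plan is to connect the Gromoll composite with the rational homotopy of the block diffeomorphism spaces of discs, and then to feed in Weiss's nontriviality result for Pontryagin classes. First I would recall the classical identifications. Morlet's lemma gives $\widetilde{\Diff}(\mathbb{D}^n,\partial)\simeq \Omega^{n}\big(\mathrm{Top}(n)/\On(n)\big)$, and Farrell--Hsiang together with the surgery-theoretic computation of $\pi_*\widetilde{\Diff}(\mathbb{D}^n,\partial)$ (the block diffeomorphism space) express $\pi_i^{\Q}\widetilde{\Diff}(\mathbb{D}^n,\partial)$ in terms of the rational homotopy of $\mathrm{Top}(n)/\On(n)$, which stably is a product of Eilenberg--MacLane spaces detected by the universal rational Pontryagin classes $p_j$. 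The key point is that the two iterations of the Gromoll map $\a_{n}$ and $\a_{n+1}$ correspond, under Morlet's equivalence, to loop-space maps on $\mathrm{Top}(\bullet)/\On(\bullet)$, so the composite \eqref{diag:W} is, after looping down twice, essentially the map on homotopy induced by the stabilization $\mathrm{Top}(n-1)/\On(n-1)\to \mathrm{Top}(n+1)/\On(n+1)$ in a range.

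Next I would pin down the precise dimensions. Set $n=2m$; then $\mathrm{Top}(n)/\On(n)=\mathrm{Top}(2m)/\On(2m)$, and the rational homotopy group $\pi_{4m+4k}^{\Q}\big(\mathrm{Top}(2m)/\On(2m)\big)$ is the target on which $p_{m+k}$ evaluates. Theorem \ref{th:Weiss} guarantees that, for $m\geq c_1$ and $k<5m/4-c_2$, the class $p_{m+k}\in H^{4m+4k}(B\mathrm{Top}(2m);\Q)$ is nonzero on $\pi^{\Q}_{4m+4k}(B\mathrm{Top}(2m))$; delooping once this says $p_{m+k}$ detects a nonzero class in $\pi^{\Q}_{4m+4k-1}(\mathrm{Top}(2m))$, hence (since $\On$ contributes nothing rationally in this degree for $k>0$) a nonzero class in $\pi^{\Q}_{4m+4k-1}(\mathrm{Top}(2m)/\On(2m))$. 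Looping this down $n=2m$ times and translating via Morlet gives a nonzero class in $\pi^{\Q}_{4m+4k-1-2m}\widetilde{\Diff}(\mathbb{D}^{2m},\partial)=\pi^{\Q}_{2m+4k-1}\widetilde{\Diff}(\mathbb{D}^{2m},\partial)$. Setting $i=2m+4k-1$ (so $i\equiv 2m-1\equiv -n-1\pmod 4$... — here one must be careful: the congruence stated in the lemma is $i\equiv -n+2\pmod 4$, so one instead takes $n$ odd, say $n-1=2m$, and runs the argument for $\mathrm{Top}(2m)/\On(2m)$ sitting at the left end of \eqref{diag:W}) one arranges that a class in $\pi^{\Q}_{i+1}\widetilde{\Diff}(\mathbb{D}^{n-1},\partial)$ detected by $p_{m+k}$ survives. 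I would then check that stabilizing from dimension $n-1$ to dimension $n+1$ does not kill the class: by Morlet plus stability of $\mathrm{Top}(\bullet)/\On(\bullet)$ (equivalently, the Igusa/Weiss--Williams range for $\widetilde{\Diff}(\mathbb{D}^\bullet,\partial)$), the stabilization map is a rational isomorphism in the relevant degree provided $i<\tfrac{7}{2}n - 11 - 4c_2$ — this is exactly where the upper bound on $i$ in the hypothesis comes from, and $n\geq 2c_1+2$ is what makes $m=\lfloor (n-1)/2\rfloor\geq c_1$.

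Finally I would assemble the diagram: $p_{m+k}$ detects a nonzero rational class $y\in\pi_{i+1}^{\Q}\widetilde{\Diff}(\mathbb{D}^{n-1},\partial)$; under Morlet this corresponds to a nonzero class in $\pi_{4m+4k-1}^{\Q}(\mathrm{Top}(2m)/\On(2m))$; the two Gromoll maps correspond to deloopings/stabilizations whose composite is, rationally, the stabilization $\mathrm{Top}(n-1)/\On(n-1)\to\mathrm{Top}(n+1)/\On(n+1)$, which is a rational isomorphism in this degree by the stated numerical constraints; hence the image of $y$ in $\pi_{i-1}^{\Q}\widetilde{\Diff}(\mathbb{D}^{n+1},\partial)$ is nonzero, and this group sits rationally inside $\pi_{i-1}^{\Q}\Diff(\mathbb{D}^{n+1},\partial)$ plus a term coming from $\Theta$-groups which vanishes rationally in this range — actually one works with $\Diff$ throughout, using that $\pi_*^{\Q}\Diff(\mathbb{D}^k,\partial)\to\pi_*^{\Q}\widetilde{\Diff}(\mathbb{D}^k,\partial)$ is an isomorphism in the pseudoisotopy stable range, which the hypotheses ensure. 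The main obstacle I anticipate is bookkeeping: matching the three separate numerical constraints ($n\geq 2c_1+2$, the mod-$4$ congruence, and $n\leq i<\tfrac72 n - 11 - 4c_2$) against the ranges in Morlet's theorem, Weiss--Williams/Igusa stability, and the degree $4m+4k$ where $p_{m+k}$ lives, while correctly tracking the degree shifts introduced by the two Gromoll maps and by delooping $B\mathrm{Top}(2m)$ down to $\mathrm{Top}(2m)$ and then looping $2m$ times. Getting all of these to line up simultaneously — in particular verifying that the ``gap'' between the Weiss range $k<5m/4-c_2$ and the stability range for $\Diff(\mathbb{D}^\bullet,\partial)$ is nonempty and gives precisely the interval $[n,\tfrac72 n-11-4c_2)$ — is the delicate part; the homotopy-theoretic content itself is standard once the identifications are in place.
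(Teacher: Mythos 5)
Your overall roadmap (Morlet smoothing theory, Gromoll maps as stabilizations, Weiss's Pontryagin classes as the detecting input) matches the paper, but two of your intermediate claims are actually wrong, and repairing them is not bookkeeping but the substance of the argument.

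First, you route through block diffeomorphisms $\widetilde{\Diff}(\mathbb{D}^n,\partial)$ and then appeal to the comparison $\pi_*^{\Q}\Diff(\mathbb{D}^k,\partial)\to\pi_*^{\Q}\widetilde{\Diff}(\mathbb{D}^k,\partial)$ being an isomorphism in the pseudoisotopy stable range. But the hypotheses here put $i\geq n$, which is deliberately \emph{outside} the Igusa stable range (the paper stresses this in the remark after Theorem \ref{app_Weiss class}); that comparison is simply unavailable. Moreover, surgery theory identifies $\widetilde{\Diff}(\mathbb{D}^n,\partial)$ with a loop space of the \emph{stable} quotient $\Top/O$, not of $\Top(n)/O(n)$, which is where Weiss's classes live unstably --- so the block route loses exactly the information you need. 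The paper sidesteps this entirely: the Morlet/Burghelea--Lashof equivalence $\pi_i\Diff(\mathbb{D}^n,\partial)\cong\pi_{i+n+1}\bigl(\Top(n)/O(n)\bigr)$ (coming from contractibility of $\Top(\mathbb{D}^n,\partial)$ by the Alexander trick plus smoothing theory) holds for $n\geq 5$ in \emph{all} degrees, with no stability hypothesis, and it is for the genuine diffeomorphism group, not the block one. Together with Burghelea's theorem that the Gromoll maps correspond to the stabilizations $\Top(n-1)/O(n-1)\to\Top(n)/O(n)\to\Top(n+1)/O(n+1)$, and the vanishing of $\pi^{\Q}_{i+n+1}BO(l)$ and $\pi^{\Q}_{i+n+2}BO(l)$ for $l\leq n+1$ (since $i\geq n$), the problem reduces to nontriviality of $\pi^{\Q}_{i+n+2}B\Top(n-1)\to\pi^{\Q}_{i+n+2}B\Top(n+1)$.

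Second, you claim the stabilization $\Top(n-1)/O(n-1)\to\Top(n+1)/O(n+1)$ is a rational isomorphism in the relevant degree and that the upper bound $i<\tfrac{7}{2}n-11-4c_2$ encodes this stability range. Neither is true. In the degree $i+n+1\in[2n+1,4n+1]$ the stabilization on $\Top(\bullet)/O(\bullet)$ is nowhere near an isomorphism, and if it were, there would be nothing to prove. What the paper actually does is pull back the Pontryagin class $p_{(i+n+2)/4}$ along $j_{2m}=j_{n+1}\circ i_{n+1}\circ i_n\circ i$ from $B\Top(2m)$ (with $(m,k)=(\tfrac{n+1}{2},\tfrac{i-n}{4})$ or $(\tfrac{n-2}{2},\tfrac{i-n+6}{4})$ depending on parity) and use Weiss's Theorem \ref{th:Weiss} to produce a spherical class on which it evaluates nontrivially; nonvanishing of the Kronecker pairing forces the composite $i^{\Q}_{n+1*}\circ i^{\Q}_{n*}$ to be nonzero. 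The bound $i<\tfrac{7}{2}n-11-4c_2$ is nothing but Weiss's constraint $k<5m/4-c_2$ rewritten in terms of $(n,i)$ (the $n$ even case is the binding one), and $n\geq 2c_1+2$ is $m\geq c_1$. So the numerics come from Weiss's range alone, not from any stability range, and your plan as written would fail at the point where you invoke the rational isomorphism.
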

\begin{proof}
  Let $\mbox{Top}(l)/O(l)$ be the homotopy fiber of the forget structure map $BO(l) \rightarrow B\mbox{Top}(l)$. Consider the commutative diagram
  $$\xymatrix{\pi_{i+1}^{\Q}\mbox{Diff}(\mathbb{D}^{n-1},\b)\ar[r]^-{\a_{n*}^{\Q}}\ar[d]_{\mu_{n-1*}^{\Q}}^{\cong}&\pi_i^{\Q}\mbox{Diff}(\mathbb{D}^n,\b)\ar[r]^-{\a_{n+1*}^{\Q}}\ar[d]_{\mu_{n*}^{\Q}}^{\cong}&\pi_{i-1}^{\Q}\mbox{Diff}(\mathbb{D}^{n+1},\b)\ar[d]_{\mu_{n+1*}^{\Q}}^{\cong}\\
  \pi_{i+n+1}^{\Q}\mbox{Top}(n-1)/O(n-1)\ar[r]&\pi_{i+n+1}^{\Q}\mbox{Top}(n)/O(n)\ar[r]&\pi_{i+n+1}^{\Q}\mbox{Top}(n+1)/O(n+1)\\
  \pi_{i+n+2}^{\Q}B\mbox{Top}(n-1)\ar[r]^{i_{n*}^{\Q}}\ar[u]^{\b_*^{\Q}}_{\cong}&\pi_{i+n+2}^{\Q}B\mbox{Top}(n)\ar[r]^{i_{n+1*}^{\Q}}\ar[u]^{\b_*^{\Q}}_{\cong}&\pi_{i+n+2}^{\Q}B\mbox{Top}(n+1)\ar[u]^{\b_*^{\Q}}_{\cong}}$$
  where the vertical maps $\mu_{l*}^{\Q}, n-1\leq l\leq n+1$ are the Morlet isomorphisms\cite[Theorem 4.2]{BL}; the horizontal maps in the middle and bottom rows are induced by the standard stabilizations $\mbox{Top}(n-1)\rightarrow \mbox{Top}(n)$ and $\mbox{Top}(n)\rightarrow \mbox{Top}(n+1)$, and the vertical maps $\b^{\Q}_*$ are the connecting homomorphisms in the homotopy exact sequences associated to the corresponding fibrations; the commutativity of the two top squares is guaranteed by \cite[Theorem 1.3]{B73}. Since $i\geq n$, then the rational homotopy groups $\pi^{\Q}_{i+n+2}BO(l)$ and $\pi^{\Q}_{i+n+1}BO(l)$ are zero for $n-1\leq l\leq n+1$ and hence the connecting homomorphisms $\b^{\Q}_*$ are isomorphisms. Hence to prove that $\a_{n+1*}^{\Q}\circ\a_{n*}^{\Q}$ is nontrivial, it suffices to show $i_{n+1*}^{\Q}\circ i_{n*}^{\Q}$ is nontrivial.

  We claim that for any pair of integers $n,i$ such that $n\geq 2c_1+2$, $i\equiv-n+2\mod 4$ and $n\leq i<\frac{7}{2}n-11-4c_2$, there is an integer $m$ such that $n-1\geq 2m$ and such that the rational Pontryagin class $j_{2m}^*p_{\frac{i+n+2}{4}}\in H^{i+n+2}(B\mbox{Top}(2m);\Q)$ evaluates nontrivially on $\pi_{i+n+2}B\mbox{Top}(2m)$. In fact,
  considers a pair of integers
  $$(m,k)=\left\{
     \begin{array}{ll}
       (\frac{n+1}{2},\frac{i-n}{4}), & \hbox{if $n$ is odd.} \\
       (\frac{n-2}{2},\frac{i-n+6}{4} ), & \hbox{if $n$ is even.}
     \end{array}
   \right.
  $$
Then $j_{2m}^*p_{\frac{i+n+2}{4}}=j_{2m}^*p_{m+k}\in H^{i+n+2}(B\mbox{Top}(2m);\Q)=H^{4m+4k}(B\mbox{Top}(2m);\Q)$ lives in the family of Pontryagin classes in Theorem \ref{th:Weiss} and hence the claim follows.

Let now $i:B\mbox{Top}(2m)\rightarrow B\mbox{Top}(n-1)$ be induced
by the standard stabilization $\mbox{Top}(2m)\rightarrow
\mbox{Top}(n-1)$. Since $$j_{n+1}\circ i_{n+1}\circ i_n\circ
i=j_{2m}:\mbox{Top}(2m)\rightarrow \mbox{Top}.$$ We have by the
claim above that there is $x\in \pi_{i+n+2}^{\Q}B\mbox{Top}(2m)$
such that
\beq
\left<j_{n+1}^*p_{\frac{i+n+2}{4}},[i_{n+1*}^{\Q}i_{n*}^{\Q}i_*^{\Q}x]\right>=
\left<i^*i_n^*i_{n+1}^*j_{n+1}^*p_{\frac{i+n+2}{4}},[x]\right>=
\left<j_{2m}^*p_{\frac{i+n+2}{4}},[x]\right>\neq 0
\eeq
where the bracket denotes the Kronecker product and $[x]\in
H_{i+n+2}(B\mbox{Top}(2m);\Q)$ (resp.\\
$[i_{n+1*}i_{n*}^{\Q}i_*^{\Q}x]\in H_{i+n+2}(B\mbox{Top}(n+1);\Q)$)
denotes the image of $x$ (resp. $i_{n+1*}^{\Q}i_{n*}^{\Q}i_*^{\Q}x$)
under the Hurewicz homomorphisms. This implies $i_{n+1*}^{\Q}\circ
i_{n*}^{\Q}$ is nontrivial and completes the proof.
\end{proof}

\begin{proof}[Proof of Theorem \ref{app_Weiss class}]
  Let $n$ and $i$ be nonnegative integers satisfy that $n\geq 2c_1+2$, $i\equiv-n+2\mod 4$ and $n+1\leq i<\frac{7}{2}n-11-4c_2$, Then by Lemma \ref{le:app} and Theorem \ref{mainth} we have that for any large enough finite sheeted cover $M$ of a closed real hyperbolic manifold, the forgetful map $\pi^{\mathbb{Q}}_{i+1}\mathcal{T}^{<0}(M)\rightarrow \pi^{\mathbb{Q}}_{i+1}\mathcal{T}(M)$ is nonzero.
The result follows if we take $c=\max\{2c_1+2,\frac{25+8c_2}{3}\}$.
\end{proof}

{\footnotesize \ }

{\footnotesize \
\bibliographystyle{alpha}
\bibliography{bib}
}
\Addresses
\end{document}